\numberwithin{equation}{section}
\newtheorem{theorem}{Theorem}[section]
\newtheorem{lemma}[theorem]{Lemma}
\newtheorem{proposition}[theorem]{Proposition}
\newtheorem{corollary}[theorem]{Corollary}
\renewcommand{\Pr}{ \mathrm P}
\newcommand{ \rel}{ t_{\mathrm{rel}} }
\newcommand{ \mix}{ t_{\mathrm{mix}} }
\newcommand{ \TV}{ \mathrm{TV} }
\newcommand{\eps}{\epsilon}
\newcommand{\la}{\lambda}
\DeclareMathSymbol{\leqslant}{\mathalpha}{AMSa}{"36} 
\DeclareMathSymbol{\geqslant}{\mathalpha}{AMSa}{"3E} 
\DeclareMathSymbol{\eset}{\mathalpha}{AMSb}{"3F}     
\renewcommand{\epsilon}{\varepsilon}
\newcommand{\N}{\mathbb N}
\newcommand{\R}{\mathbb R}
\newcommand{\Z}{\mathbb Z}
\begin{document}

\title{Relaxation times are stationary hitting times of large sets}
\author{Jonathan Hermon
\thanks{
Department of Mathematics, University of British Columbia, Canada. E-mail: {\tt jhermon@math.ubc.ca}. J.H.\ is supported by NSERC grants.}}
\date{}
\maketitle

\begin{abstract}
We give a characterization of the relaxation time up to an absolute constant factor, in terms of stationary expected hitting times of large sets. This resolves a conjecture of Aldous and Fill. We give a similar characterization for the spectral profile. We also provide in the non-reversible setup a related characterization for stationary expected hitting times of large sets.\end{abstract}

\paragraph*{\bf Keywords:}
{\small Relaxation time, spectral profile, hitting times, quasi-stationary.
}

\section{Introduction}
In this work we give a characterization of the relaxation time up to an absolute constant factor, in terms of stationary hitting times of large sets. We denote the stationary distribution of an irreducible, reversible Markov chain $(X_m)_{m = 0}^{\infty}$ with transition matrix $P$ on a finite state space $V$ by $\pi$ and its \emph{relaxation time}, defined as the inverse of the spectral gap, by \[\rel:=(1-\text{the second largest eigenvalue of } P)^{-1}.\] For a set $A \subseteq V $ we denote its \emph{hitting time} by $T_A:=\inf\{m:X_m \in A \}$. We write $\pi_A$ for $\pi$ conditioned on $A$, i.e. \[\pi_A(a):=\frac{\pi(a)}{\pi(A)}\mathbf{1}\{a \in A\}.\] The following is our main result.
\begin{theorem}
\label{thm:1}
There exists an absolute constant $c_{1}>0$ such that for every irreducible reversible Markov chain on a finite state space $V$ we have that
\begin{equation}
\label{e:1}
c_{1}\rel \le t_{\mathrm{H}}^\pi:= \max_{A \subsetneq V  \, :\, \pi(A) \ge 1/2}\mathbb{E}_{\pi_{A^c}}[T_A] \le 2 \rel. 
\end{equation} 
\end{theorem}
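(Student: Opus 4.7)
For the upper bound, fix $A\subsetneq V$ with $\pi(A)\ge 1/2$ and let $h(x):=\mathbb E_x[T_A]$. Then $h$ vanishes on $A$ and $(I-P)h=\mathbf 1_{A^c}$ on $A^c$, so
\[
\mathcal{E}(h,h)=\langle h,(I-P)h\rangle_{\pi}=\pi(A^c)\,\mathbb E_{\pi_{A^c}}[T_A].
\]
Since $h\ge 0$ is supported on $A^c$, Cauchy--Schwarz gives $\pi(h)^2\le\pi(A^c)\,\pi(h^2)$, hence $\Var_\pi(h)\ge \pi(A)\,\pi(h)^2/\pi(A^c)$. Combining this with the Poincar\'e inequality $\Var_\pi(h)\le\rel\cdot\mathcal{E}(h,h)$ and the identity $\pi(h)=\pi(A^c)\,\mathbb E_{\pi_{A^c}}[T_A]$ yields $\pi(A)\,\mathbb E_{\pi_{A^c}}[T_A]\le\rel$, i.e.\ $\mathbb E_{\pi_{A^c}}[T_A]\le 2\rel$ as soon as $\pi(A)\ge 1/2$.

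The lower bound rests on the variational estimate that, for any set $A$ and any $f\ge 0$ with $f|_A=0$,
\[
\pi(h_A)\;\ge\;\frac{\pi(f)^2}{\mathcal{E}(f,f)},
\]
which follows from $\mathcal{E}(f,h_A)=\pi(f)$ (a consequence of $(I-P)h_A=\mathbf 1_{A^c}$ on $A^c$ and $f|_A=0$) combined with Cauchy--Schwarz $\mathcal{E}(f,h_A)^2\le\mathcal{E}(f,f)\cdot\mathcal{E}(h_A,h_A)=\mathcal{E}(f,f)\,\pi(h_A)$. Let $\phi$ be an $L^2(\pi)$-normalized eigenfunction of $P$ with eigenvalue $\lambda_2$, so $\mathcal{E}(\phi,\phi)=\rel^{-1}$, and for a sign $\sigma\in\{\pm 1\}$ and a threshold $t$ set $\tilde\phi:=(\sigma(\phi-t))^+$. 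Since truncation is $1$-Lipschitz, $\mathcal{E}(\tilde\phi,\tilde\phi)\le\mathcal{E}(\phi,\phi)=\rel^{-1}$, so taking $A:=\{\sigma\phi\le\sigma t\}$ in the variational bound with $f=\tilde\phi$ gives
\[
\mathbb E_{\pi_{A^c}}[T_A]\;\ge\;\rel\cdot\frac{\pi(\tilde\phi)^2}{\pi(A^c)}.
\]
It thus suffices to find $\sigma$ and $t$ satisfying both $\pi(A)\ge 1/2$ and $\pi(\tilde\phi)^2\ge c\,\pi(A^c)$ for some absolute constant $c>0$; note that for any $t$ there is a sign $\sigma$ making $\pi(A)\ge 1/2$, and we pick the one whose $A^c$-side carries at least half of $\pi(\phi^2)$.

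The technical crux is the second condition, a Paley--Zygmund-type ``spreadness'' bound on the $L^1$-mass of the truncated eigenfunction. This cannot in general be secured by placing $t$ at the median of $\phi$: an eigenfunction may concentrate most of its $L^2$-mass on a rare high-value atom, so that a near-median truncation has $L^1$-mass much smaller than $\sqrt{\pi(A^c)}$ (e.g.\ a three-scale distribution $\phi\in\{-M,-1,0,1,M\}$ with carefully chosen probabilities has ratio $\pi(\tilde\phi)^2/\pi(\tilde\phi>0)\to 0$ at the median but $\ge 1/2$ at a threshold lying between the two positive scales). The main step is therefore to choose $t$ adaptively to the distribution of $\phi$ under $\pi$, via a careful level-set or dyadic scale analysis of the tail of $\phi$, and to show that some scale yields a threshold where both required inequalities hold simultaneously. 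This adaptive level-set selection is the heart of the lower bound and the main obstacle to overcome.
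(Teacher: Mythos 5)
Your upper bound is correct: the computation $\mathcal{E}(h,h)=\pi(A^c)\mathbb{E}_{\pi_{A^c}}[T_A]=\mathbb{E}_{\pi}[h]$ for $h(x)=\mathbb{E}_x[T_A]$, combined with Cauchy--Schwarz and the Poincar\'e inequality, gives $\pi(A)\mathbb{E}_{\pi_{A^c}}[T_A]\le \rel$, which is a direct variational rendering of the paper's route via \eqref{e:mixture2} and \eqref{e:spectralprofiletwodefinitionsareequiv}. The lower bound, however, is not a proof: you set up the variational inequality $\mathbb{E}_{\pi_{A^c}}[T_A]\ge \rel\,\pi(\tilde\phi)^2/\pi(A^c)$ for truncations $\tilde\phi=(\sigma(\phi-t))^+$ of the second eigenfunction, and then explicitly defer the step of producing a threshold $t$ with $\pi(A)\ge 1/2$ and $\pi(\tilde\phi)^2\ge c\,\pi(A^c)$, calling it ``the main obstacle to overcome.'' That deferred step is precisely the heart of the theorem, so the argument as it stands has a genuine gap.

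Worse, the gap cannot be closed in the form you propose. The statement you need is purely about the distribution of $\phi$ under $\pi$ (given $\|\phi\|_2=1$), and as such it is false: if $\phi^+$ puts equal $\ell^2$-mass on $K$ lacunary scales, say values $2^k$ with probabilities proportional to $4^{-k}$ for $0\le k\le K$, then for \emph{every} threshold $t$ one has $\pi\left((\phi-t)^+\right)^2\le C\,\pi(\phi>t)/K$, so no ``adaptive level-set or dyadic scale analysis'' of the law of $\phi$ alone can yield an absolute constant; and such exponentially growing eigenfunction profiles (with $\pi$ decaying at the compensating rate) do occur for reversible chains, so the truncation-plus-Cauchy--Schwarz framework loses an unbounded factor. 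This is why the paper does not work with level sets of the eigenfunction of $P$ at all. It uses $\phi$ only to produce $B=\{\phi>0\}$ with $\pi(B)\le 1/2$ and $\lambda(B)\le 1/\rel$ (a martingale argument plus \eqref{e:lambdaaslimit}), and the crux is Theorem \ref{thm:2}: one passes to the quasi-stationary density ratio $f=\alpha^B/\pi_B$ of the \emph{killed} chain on $B$, chooses in Lemma \ref{lem:good-superlevelset} a super-level set $B_L$ at a delicately selected level $L$ satisfying the flatness conditions $\mathbb{E}_{\pi_L}[f]\ge\frac{20}{17}L$ and $\mathbb{E}_{\pi_L}[f^2]=2L\,\mathbb{E}_{\pi_L}[f]\le 4L^2$, and then exploits the eigen-relation $\alpha^B P_B=\beta(B)\alpha^B$ through the optional-stopping argument for the martingale $\beta^{-k\wedge T_{L}}f(X_{k\wedge T_L})$, rather than a Dirichlet-form bound. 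The second-moment flatness at the chosen level, and the use of the killed-chain eigenstructure of $f$ (not available for truncations of the global eigenfunction), are exactly the ingredients your proposal is missing, and the paper's illustrative example in \S\ref{s:example} (where the witness set must have $\pi$-measure $o(1)$) shows how far the naive choices are from sufficing.
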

Theorem \ref{thm:1}  resolves a conjecture of Aldous and Fill \cite[Open Problem 4.38]{aldous}, which asserts that under reversibility $\rel\le C \max_{A:A \subsetneq V }\pi(A) \mathbb{E}_{\pi_{A^c}}[T_A]$ for some absolute constant $C$ (note that $\max_{A:A \subsetneq V }\pi(A) \mathbb{E}_{\pi_{A^c}}[T_A] \le 2t_{\mathrm{H}}^\pi$). We shall concentrate on the discrete time setup.
As explained in \S\ref{s:ctstime}, all of the results from this paper concerning discrete-time Markov chains on a finite state space  are valid also in the continuous time setup (with an arbitrary Markov generator, not necessarily of the form $P-I$ for some transition matrix $P$).   This can easily be deduced  from the discrete time results.

Surprisingly, Theorem \ref{thm:1}   has an extension to the non-reversible setup. This is Theorem \ref{thm:3}, which is presented in \S\ref{s:nonrevintro}. In this extension we show that even without reversibility $t_{\mathrm{H}}^\pi$ is still comparable (up to an absolute constant multiplicative factor) to a certain generalization of the notation of the relaxation time to the non-reversible setup. Albeit the right notion is not the most straightforward one as it (crucially) involves time averaging.
 Namely,   we  show that  $t_{\mathrm{H}}^\pi$ is comparable to the minimal $t \ge 1 $ such that  the second largest eigenvalue of the multiplicative reversibilization of  $P^{G(t)}:=\sum_{i=0}^{\infty}\mathbb{P}[\eta_t=i] P^i=\sum_{i=0}^{\infty} (t-1)t^{-i}P^i$ (i.e., of $(P^{G(t)})^*P^{G(t)}$) is at most $1/e^{2}$, where $\eta_t +1 \sim \text{Geometric}(1/t)$. Let us recall that if $K$ is an irreducible transition matrix with a stationary distribution $\pi$, its \emph{time-reversal} $K^*$ is given by  $K^*(x,y):=\frac{\pi(y)}{\pi(x)}K(y,x)$ for all $x,y$. We say that $K$ is \emph{reversible} if $K=K^*$. 

In \S\ref{s:ncycle} we demonstrate the necessity of employing time averaging when extending Theorem \ref{thm:1} to the non-reversible setup. Our usage of time averaging in terms of the Geometric distribution is inspired by the elegant use of it in \cite{PS} by Peres and Sousi, who attribute the idea of employing such a time averaging to Oded Schramm.

The total variation distance between two distributions $\nu$ and $\mu$  on the same countable (possibly finite) set $V$ is defined as $\|\mu-\nu\|_{\TV}:=\frac 12\sum_{v \in V}|\mu(v)-\nu(v)|$. The $\eps$ total variation mixing time $t_{\mathrm{mix}}^{\mathrm{TV}}(\eps)$ is defined to as
\[t_{\mathrm{mix}}^{\mathrm{TV}}(\eps):=\max_{v \in V} \inf \{t \ge 0 : \| \mathbb{P}_v[X_t= \cdot]-\pi \|_{\TV} \le \eps \}. \]
 When $\eps=1/4$ we omit it from the notation and refer to    $t_{\mathrm{mix}}^{\mathrm{TV}}=t_{\mathrm{mix}}^{\mathrm{TV}}(1/4)$ as the total variation mixing time.

Under reversibility   $t_{\mathrm{mix}}^{\mathrm{TV}}$ admits a hitting time characterization in terms of a certain quantity $t_{\mathrm{H}}(1/2)$ defined in the following display. This is very similar to our characterization of  $\rel$ in terms of  $t_{\mathrm{H}}^\pi$. The difference is that in the definition of   $t_{\mathrm{H}}^\pi$, when considering the expectation of $T_A$, the initial distribution is taken to be  $\pi_{A^c}$, whereas  the definition of $ t_{\mathrm{H}}(1/2)$ involves the worst initial state. Namely, there exist  absolute constants $c,C>0$ such that for all irreducible, reversible Markov chains on a finite state space $V$ with a stationary distribution $\pi$, the total variation mixing time in the continuous time setup, and under the assumption that $\min_{x \in V}P(x,x) \ge \frac 12$ also in the discrete time setup, satisfies that
\[ct_{\mathrm{mix}}^{\mathrm{TV}}\le t_{\mathrm{H}}(1/2)\le C t_{\mathrm{mix}}^{\mathrm{TV}}, \quad \text{where} \quad  t_{\mathrm{H}}(\alpha):=\max_{A \subset V ,  \, x \in V :\, \pi(A) \ge \alpha}\mathbb{E}_{x}[T_A].   \]
 Aldous \cite{Aldoussome} first proved this with $\max_{A \subset V,  x \in V \, }\pi(A)\mathbb{E}_{x}[T_A] $ in place of $ t_{\mathrm{H}}(1/2)$. This was then refined independently by Oliveira \cite{Oliveira} and by Peres and Sousi \cite{PS}, who proved that for all $\alpha \in (0, 1/2)$ the above holds with  $ t_{\mathrm{H}}(\alpha)$ in place of $ t_{\mathrm{H}}(1/2)$, with constants $c=c_{\alpha}$ and $C=C_{\alpha}$ that depend on $\alpha$.  This was later strengthened to the assertion of the last display in \cite{half}, where it is shown that $t_{\mathrm{H}}(\alpha)\le t_{\mathrm{H}}(1/2)/\alpha$ for all $\alpha \in (0,1/2)$.

  The aforementioned results were refined by Basu, Peres and the author \cite{Basu}, who gave a characterization of the cutoff phenomenon  for reversible Markov chains in terms of concentration of hitting times of ``worst"large sets. In particular, it is shown in  \cite{Basu} that  there exists an absolute constant $C>0$ such that  for all $\eps \in (0,1)$ and all $0<\delta< \eps \wedge (1-\eps)$, where $a \wedge b:=\min \{a,b\}$ and $a \vee b:=\max \{a,b\}$, for all continuous-time reversible Markov chains on a finite state space $V$  we have that
\begin{equation}
\label{e:hitmix}
\rel \log \left(\frac{1}{2 \eps} \right) \vee \mathrm{hit}_{1-\delta}(\eps+\delta)  \le t_{\mathrm{mix}}^{\mathrm{TV}}(\eps) \le \mathrm{hit}_{1-\delta}(\eps-\delta)+C \log(1/\delta)\rel ,
\end{equation}
where for $\alpha,p \in (0,1)$, \[\mathrm{hit}_{\alpha}(p):=\inf \{t:\mathbb{P}_x[T_A>t] \le 1-p \text{ for all }x \in V \text{ and all }A \subset V \text{ such that }\pi(A) \ge \alpha \}.\]
In the discrete time setup the same holds with $\rel$ replaced by $t_{\mathrm{rel}}^*-1$, where $t_{\mathrm{rel}}^*$ is the absolute relaxation time, defined as the maximum of $\rel$ and the inverse of $1+\lambda_{|V|}$, where $\lambda_{|V|}$ is the smallest eigenvalue of $P$ (see  \cite[Remark 1.9]{Basu}).
Another similar two-sided inequality is established in \cite{Basu} in which $\mathrm{hit}_{1-\delta}$ above is replaced with $\mathrm{hit}_{1/2}$.
See \cite{tech} for some related results, as well as some results related to \cite{half}, such as the bound \cite[Proposition 1.13]{tech}\[t_{\mathrm{H}}(1-\eps)-t_{\mathrm{H}}(\eps) \le C \eps^{-1} \log (1/\eps) \rel \quad \text{for all } \eps \in (0,1/2). \] 
In \cite{HP} the author and Peres obtained a characterization (up to a universal constant) of the $\ell^{\infty}$ mixing time (see \eqref{e:tmixinftydef} for a definition) of reversible Markov chains in terms of tails of hitting times.
Namely, under reversibility, the $\ell^{\infty}$ mixing time is comparable to the minimal $t$ such that  $\mathbb{P}_x[T_{B^c}>t] \le \pi(B) $ for all initial states $x$ and all $B \subset V$ such that $\pi(B) \le 1/2$.
A similar characterization is given in \cite{HP} for the relative entropy mixing time.   
  
As we now explain, Aldous and Fill's conjecture has a natural interpretation in terms of a comparison of the expected exit time from a set $B$ starting from initial distribution $\pi_B$ (the stationary distribution conditioned on $B$, i.e., $\pi_B(b):=\frac{\mathbf{1}\{b \in B\} \pi(b)}{\pi(B)}$) with  the expected exit time from $B$ starting from the \emph{quasi-stationary distribution of} $B$, denoted by $\alpha^B$.\footnote{We write $\alpha^B$ instead of $\alpha_B$ to avoid the interpretation ``$\alpha$ conditioned on $B$".  In the proof of Theorem \ref{thm:2} we shall suppress the dependence on $B$, and then for $D \subset B$ write $\alpha_D$ for $\alpha^B$ conditioned on $D$.} As we later explain, the latter expected exit time is intimately related to the spectral profile.

Theorem \ref{thm:1} follows from our second theorem, Theorem \ref{thm:2}, which asserts that for some absolute constant $c>0$, for all irreducible, reversible Markov chains with a finite state space $V$ and stationary distribution $\pi$, we have  for all $\eset \neq B \subsetneq V $ that 
\begin{equation}
\label{e:piqsintro}
c\mathbb{E}_{\alpha^{B}}[T_{B^c}]\le \max_{D \,: \, D \subseteq B}\mathbb{E}_{\pi_{D}}[T_{D^c}] \le\mathbb{E}_{\alpha^{B}}[T_{B^c}].
\end{equation}
Consider a reversible Markov chain with a finite state space $V$. Denote the transition matrix of the chain by $P$.  Let $\eset \neq B \subsetneq V$. 
Denote the transition matrix of the chain killed upon hitting $B^{c}$  by  $P_B$. In other words, this is the \emph{restriction} of the matrix $P$ to $B$, obtained by deleting from $P$ the rows and columns corresponding to elements of $B^c$, resulting in the substochastic square matrix indexed by $B$, satisfying that $P_B(x,y)=P(x,y)$ for all $x,y \in B$. By abuse of notation, we shall also   use $P_B$ to  refer  to the substochastic matrix labeled by $V$, obtained by setting the entries in the rows and columns of $P$ corresponding to $B^c$ to zero. That is,
 $P_B(x,y)=P(x,y)\mathbf{1}\{x,y \in B\}$.
Denote the largest eigenvalue of $P_{B}$ by $\beta(B):= 1-\lambda(B)$. When we want to emphasize the underlying transition matrix we write  $\beta_{P}(B):= 1-\lambda_{P}(B)$.  By the Perron--Frobenius theorem there exists a distribution $\alpha^B $ on $B$ satisfying that $\alpha^B P_B=\beta(B)\alpha^B$, from which it is not hard to deduce that $\mathbb{P}_{\alpha^{B}}[T_{B^{c}}>k]=\beta(B)^{k}$ for all $k$ and so $\mathbb{E}_{\alpha^{B}}[T_{B^{c}}]=1/\lambda(B)$. The distribution $\alpha^B$ is called the \emph{quasi-stationary distribution} of $B$. The Perron--Frobenius theorem also asserts that if $P_B$ is irreducible, meaning that $\sum_{n=0}^{\infty}(P_B)^n(x,y)>0$ (equivalently, $\mathbb{P}_x[T_y<T_{B^c}]>0$) for all $x,y \in B$, then $\alpha^B$ is the unique distribution which is a left eigenvector of $P_B$. If $P_B$ is reducible, we pick $\alpha^B$ to be a distribution which is a left eigenvector of $P_B$ with the largest eigenvalue, which we denote by $\beta(B)=1-\lambda(B)$, among all  distribution which are left eigenvectors of $P_B$.  

As we now explain, it is well-known that \eqref{e:1} indeed holds if one replaces the quantity $t_{\mathrm{H}}^\pi=\max_{A \, :\, \pi(A) \ge 1/2}\mathbb{E}_{\pi_{A^c}}[T_A] $ with  $\max_{A \, :\, \pi(A) \ge 1/2}\mathbb{E}_{\alpha^{A^{c}}}[T_A] $. Indeed, in \S\ref{s:background} we prove the standard fact that for all reversible Markov chains on a finite state space\footnote{We require $|V| \ge 2$ as part of our definition of a Markov chain.} $V$ there exists $\eset \neq B_{1} \subsetneq V$ with $\pi(B_{1}) \le 1/2$ whose quasi-stationary distribution $\alpha^{B_{1}}$ satisfies that
\begin{equation}
\label{e:almost}
\rel \le 1/\lambda(B_{1})=\mathbb{E}_{\alpha^{B_{1}}}[T_{B_1^c}]\le 2 \rel,
\end{equation}
and the fact that for all $\eset \neq B \subsetneq V$ it holds that
\begin{equation}
\label{e:almost2}
\mathbb{E}_{\pi_{B}}[T_{B^{c}}] \le\mathbb{E}_{\alpha^{B}}[T_{B^c}]= 1/\lambda(B). \end{equation}
In the seminal work  \cite{AB} Aldous and Brown proved that for reversible Markov chains, for all $B \subset V$ and $t$
\[ 1-\frac{\rel}{\mathbb{E}_{\alpha^{B}}[T_{B^c}]}  \le \frac{\mathbb{P}_{\pi}[T_{B^c}>t]}{\beta(B)^{t}} =\frac{\mathbb{P}_{\pi}[T_{B^c}>t]}{\mathbb{P}_{\alpha^{B}}[T_{B^c}>t]}\le  1 \quad \text{and so }\]
 \[\mathbb{E}_{\alpha^{A^{c}}}[T_A]-\rel \le  \mathbb{E}_{\pi}[T_{A}] \le \mathbb{E}_{\alpha^{A^{c}}}[T_A] \quad \text{for all } A \subsetneq V.\footnote{For a recent refinement of this inequality see \cite[Theorems 5.2, 5.4 and 5.5]{BHT}.}  
\]
In conjunction with \eqref{e:almost} and \eqref{e:almost2}, at first glance this appears to come tantalizingly close to proving 
 $\rel\lesssim \max_{A}\pi(A) \mathbb{E}_{\pi_{A^c}}[T_A]$. Indeed, fix a small absolute constant $c \in (0,1/2)$. Consider a set $A$ which minimizes  $\la(A^{c})$ among all  $ A \subset B_1^c$ such that $\pi(A) \ge c$, where $B_1$ is as in \eqref{e:almost}. While it is always the case that $\lambda(B') \le \lambda(B)$ whenever $B\subseteq B'$ (see \eqref{e:lambda(A)CF}), one might hope that if $c$ is sufficiently small, then for this  $A$ (which satisfies $A ^{c}\supset B_1$)  instead of merely having $\lambda(A^{c}) \le \lambda(B_{1}) $ it holds that $(1+c)\lambda(A^{c})\le \lambda(B_{1}) $. By \eqref{e:almost} this would imply that $ \mathbb{E}_{\alpha^{A^{c}}}[T_A]\ge (1+c)\rel$. Plugging this into the Aldous--Brown inequality would then yield that $\mathbb{E}_{\pi_{A^c}}[T_A] \ge\mathbb{E}_{\pi}[T_A] \ge c \rel$, as desired. 

 Alas,
the example which we study in \S\ref{s:example} shows that   for simple random walk on an $n$-vertex graph of maximal degree 4 it is possible that $ \rel \asymp n^{2/3}$ while for all fixed $\delta \in (0,\frac 12]$, $\max_{B:\pi(B) \in [\delta ,1-\delta]}\mathbb{E}_{\pi_{B}}[T_{B^c}] \le C(\delta)$ for some constant $C(\delta)$ which  depends only on $\delta$.  It follows that one cannot replace $\max_{D \,: \, D \subseteq B}\mathbb{E}_{\pi_{D}}[T_{D^c}] $ in \eqref{e:piqsintro}  with $\mathbb{E}_{\pi_{B}}[T_{B^c}]$ and that for some sequences of reversible Markov chains the maximum in the definition $t_{\mathrm{H}}^\pi $ from \eqref{e:1} can only be attained  by a sequence of sets $A$ with $\pi(A^c)=o(1)$ (suppressing the dependence of $A$ and of $\pi$ on the index of the chain in the sequence).

Let $\pi_*:=\min_x \pi(x)$. Introduced by Goel et al.\ \cite{spectral}, the \emph{spectral profile }  $\Lambda=\Lambda_P:[\pi_*,\infty) \to (0,1] $ of $P$ is given by $\Lambda(\delta)=\Lambda(1-\pi_*)$ for all $\delta > 1-\pi_*$ and for $\delta \in (0,1-\pi_*]$, 
\begin{equation}
\label{e:spdefintro1}
\begin{split}
\Lambda(\delta): & =\min \{\lambda (B): B \subset V,\, 0<\pi(B) \le \delta\}
\\ & =\min \left\{\frac{1}{\mathbb{E}_{\alpha^B}[T_{B^c}]} : B \subset V,\,0<\pi(B) \le \delta \right\}.
\end{split}
\end{equation}
We note that $\Lambda(\delta)=\min \{\lambda (B): B \subset V,\,0<\pi(B) \le \delta,\, B \text{ is irreducible}\}$.
Using
\eqref{e:piqsintro} we can compare $\Lambda(\delta)$ with the quantity $\kappa(\delta)$, defined via \[\kappa(\delta):=\min \left\{\frac{1}{\mathbb{E}_{\pi_{B}}[T_{B^{c}}]} :B \subset V,\, 0<\pi(B) \le \delta \wedge (1-\pi_*), \, B \text{ is irreducible}  \right\}.  \]
\begin{theorem}
\label{thm:2}
There exists an absolute constant $c_{2}>0$ such that for all irreducible, reversible Markov chains on a finite state $V$,  for all $B \subsetneq V $ we have that 
\begin{equation}
\label{e:2}
\frac{c_{2}}{\lambda(B)} \le \max_{D \,: \, D \subseteq B}\mathbb{E}_{\pi_{D}}[T_{D^{c}}] \le \frac{1}{\lambda(B)}.
\end{equation}
Hence for all $\delta \in [\pi_*,1]$
\begin{equation}
\label{e:3}
\Lambda(\delta) \le \kappa(\delta)\le \frac{1}{c_{2}} \Lambda(\delta). 
\end{equation}
\end{theorem}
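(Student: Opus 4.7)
The upper bound in \eqref{e:2} is essentially free: by the variational characterization $\lambda(D) = \min\{\mathcal{E}(f,f)/\|f\|_\pi^2 : \mathrm{supp}(f) \subseteq D\}$, extending test functions by zero shows $\lambda(D) \ge \lambda(B)$ whenever $D \subseteq B$, and combining with \eqref{e:almost2} applied to $D$ in place of $B$ yields $\mathbb{E}_{\pi_D}[T_{D^c}] \le 1/\lambda(D) \le 1/\lambda(B)$. The two-sided bound \eqref{e:3} then follows from \eqref{e:2} by taking the minimum over irreducible $B \subseteq V$ with $\pi(B) \le \delta$: the right-hand inequality of \eqref{e:2} gives $\Lambda(\delta) \le \kappa(\delta)$, and the left-hand inequality gives $\kappa(\delta) \le \Lambda(\delta)/c_{2}$, since the maximizer $D \subseteq B$ automatically satisfies $\pi(D)\le\delta$.

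The real content is the lower bound in \eqref{e:2}. The plan is to choose $D$ as a suitable level set of the Perron eigenfunction $\phi$ of $P_B$. With the normalization $\|\phi\|_{\ell^2(\pi)} = 1$, the quasi-stationary measure satisfies $\alpha^B \propto \phi \cdot \pi|_B$, and following the footnote I set $\alpha_D := \alpha^B(\,\cdot\,)/\alpha^B(D)$ on $D$. The key observation is that $d\alpha_D/d\pi_D(x) = \phi(x)/\mathbb{E}_{\pi_D}[\phi]$, so on any dyadic slice $D \subseteq \{\theta < \phi \le 2\theta\}$ the densities of $\pi_D$ and $\alpha_D$ are comparable up to a pointwise factor of $2$, giving $\mathbb{E}_{\pi_D}[T_{D^c}] \asymp \mathbb{E}_{\alpha_D}[T_{D^c}]$. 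It thus suffices to find a flat slice on which $\mathbb{E}_{\alpha_D}[T_{D^c}] \gtrsim 1/\lambda(B)$.

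To control the latter I would restrict the eigenvalue equation $P_B\phi = (1-\lambda(B))\phi$ to $D$, which for $x \in D$ yields $(I-P_D)\phi|_D(x) = \lambda(B)\phi(x) + R(x)$ with nonnegative boundary leakage $R(x):=\sum_{y\in B\setminus D}P(x,y)\phi(y)$. Pairing with $\phi|_D$ in $\ell^{2}(\pi|_D)$ gives $\mathcal{E}_{P_D}(\phi|_D,\phi|_D) = \lambda(B)\|\phi|_D\|_{\pi|_D}^{2} + \langle\phi|_D,R\rangle_{\pi|_D}$, so if the slice is chosen with leakage at most a constant times $\lambda(B)\|\phi|_D\|_{\pi|_D}^{2}$, the Rayleigh bound gives $\lambda(D) = O(\lambda(B))$; moreover the same smallness forces $\phi|_D$ to nearly equal the top eigenfunction $\phi_{1}^{D}$ of $P_D$, and flatness of $\phi$ then forces $\phi_{1}^{D}$ to be nearly constant on $D$. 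The spectral identity $\mathbb{E}_{\pi_D}[T_{D^c}] = \pi(D)^{-1}\sum_{i}(a_{i}^{D})^{2}/\lambda_{i}^{D} \ge (a_{1}^{D})^{2}/(\pi(D)\lambda(D))$, together with $(a_{1}^{D})^{2}\gtrsim\pi(D)$, then yields the desired $\gtrsim 1/\lambda(B)$.

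The main obstacle is exhibiting a single dyadic slice satisfying both the flatness and the small-leakage conditions without the $O(\log\pi_{*}^{-1})$ factor that a naive pigeonhole over all dyadic slices would incur. I expect the proof resolves this by an extremal or recursive choice of slice: either one takes the super-level set $D_{\theta^{*}}=\{\phi>\theta^{*}\}$ minimizing $\lambda(D_\theta)$ and uses that a first-order optimality condition in $\theta$ automatically controls the leakage, or alternatively one proceeds by iteration, showing that if no slice of $B$ works then the optimal super-level set $B'\subsetneq B$ has $\lambda(B')$ still comparable to $\lambda(B)$, and recursing on $B'$; each iteration concentrates a constant fraction of the remaining $\ell^{2}(\pi)$-mass of the restricted Perron function, so the recursion terminates in finitely many steps with only an absolute-constant loss in the ratio.
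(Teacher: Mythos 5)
The routine parts of your proposal are fine and agree with the paper: the upper bound in \eqref{e:2} via \eqref{e:almost2}/\eqref{e:mixture2} together with $\lambda(D)\ge\lambda(B)$ for $D\subseteq B$, and the deduction of \eqref{e:3}. But the lower bound in \eqref{e:2} --- the entire content of the theorem --- is not proved. You reduce it to producing a dyadic slice $D\subseteq\{\theta<\phi\le 2\theta\}$ of the Perron eigenfunction whose boundary leakage satisfies $\langle\phi|_D,R\rangle_{\pi}\lesssim\lambda(B)\,\|\phi|_D\|_{2}^2$, and you explicitly leave the existence of such a slice as a speculation (an ``extremal or recursive choice''). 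That missing step is the crux, and worse, the criterion you propose to verify is unsatisfiable in general. Take nearest-neighbour walk on a segment of length $r$ killed at one endpoint (the set $F$ of \S\ref{s:example}): $\phi(i)\asymp\sin\bigl(\tfrac{\pi i}{2r}\bigr)$, $\lambda(B)\asymp r^{-2}$, and each vertex has comparable $\pi$-mass. Any cut of the segment at height $\phi\asymp\theta$ contributes leakage $\asymp\theta^{2}\pi(x_{\mathrm{cut}})$, while the allowed budget for a slice $D$ on which $\phi\asymp\theta$ is $\lambda(B)\|\phi|_D\|_{2}^{2}\asymp r^{-2}\theta^{2}|D|\,\pi(x_{\mathrm{cut}})$; since $|D|\le r$, the leakage exceeds the budget by a factor at least $r$ for \emph{every} candidate $D$, super-level sets included. (The reason is that $\phi|_D$ is a poor test function for $P_D$ near the killed side of $D$: $\lambda(D)\lesssim\lambda(B)$ may hold, but never via the Rayleigh quotient of $\phi|_D$.) In addition, your intermediate inference ``small leakage forces $\phi|_D$ to nearly equal the top eigenfunction of $P_D$'' requires a gap between the top two eigenvalues of $P_D$, over which you have no control; that particular step could be repaired by a Chebyshev bound on the spectral measure of $\phi|_D$, but the leakage hypothesis it rests on cannot.

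The paper's mechanism is genuinely different and avoids all spectral analysis of $P_D$. It takes $f=\alpha^{B}/\pi_{B}$ and a \emph{super-level} set $B_L=\{f\ge L\}$ (not a flat slice), with $L$ pinned down by the exact second-moment identity $\mathbb{E}_{\pi_{L}}[f^{2}]=2L\,\mathbb{E}_{\pi_{L}}[f]$ of Lemma \ref{lem:good-superlevelset}; this alone yields $\mathbb{E}_{\pi_{L}}[f]>\tfrac{20}{17}L$ and $\mathbb{E}_{\pi_{L}}[f^{2}]\le 4L^{2}$, with no flatness, no leakage control, and no comparison of $\lambda(B_L)$ with $\lambda(B)$. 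One then stops the martingale $\beta^{-k}f(X_k)$ at time $t\wedge T_{B_L^{c}}$ with $t\asymp 1/\lambda(B)$: on the exit event $f(X_{T_{B_L^c}})<L$, and on the no-exit event Cauchy--Schwarz together with $\mathbb{P}_{\pi_L}[X_t=x,\,T_{B_L^c}>t]\le\pi_L(x)$ and the second-moment bound give \eqref{e:EWt}, forcing $\mathbb{P}_{\pi_{L}}[T_{B_L^{c}}>t]$ to be bounded below by an absolute constant. If you want to rescue your outline, you should drop the leakage/Rayleigh reduction and aim directly for a one-level-set statement of this optional-stopping type.
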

We present an extension of Theorem \ref{thm:2} to the case of reversible Markov chains on a countably infinite state space in \S\ref{s:countableintro}. In particular, in \S\ref{s:countableintro}  we allow  $\pi(V)=\infty$. In \S\ref{s:SP} we present a surprising conceptual consequence of Theorem \ref{thm:2} (more precisely, of the proof of Theorem \ref{thm:2}).
\subsection{The spectral profile}
 The spectral profile was first introduced in the context of the quantitative study of Markov chains on finite state spaces in  \cite{spectral}. The authors of  \cite{spectral} defined it in a slightly different fashion. We denote the function they call the spectral profile by $\widehat \Lambda$. We shall recall its definition below (see \eqref{e:spwidehatdefintro}). In \S\ref{s:1.9} we reproduce  the argument from  \cite{spectral} which shows that  
\begin{equation}
\label{e:spectralprofiletwodefinitionsareequiv}
\Lambda(\delta) \le \widehat \Lambda(\delta) \le (1-\delta)^{-1} \Lambda(\delta) \quad \text{for all }\delta \in [\pi_*,1).
\end{equation}
In particular, for $\delta \le 1/2$ the ratio of the two quantities is  at most  2. 

We now recall two general theoretical applications of the spectral profile. A fundamental quantity associated with a Markov chain is its \emph{log-Sobolev constant} $c_{\mathrm{LS}}$. In their seminal work \cite{LS}, Diaconis and Saloff-Coste established  intimate quantitative ties between  $c_{\mathrm{LS}}$   and the $\ell^{\infty}$ mixing time (in the reversible setup) as well as with the remarkable general phenomenon of hypercontractivity. Moreover, they showed that  $c_{\mathrm{LS}}$  is amenable to the method of comparison of Dirichlet forms (via the canonical paths method), allowing one to obtain an estimate of   $c_{\mathrm{LS}}$  for one Markov chain in terms of that of another (and the maximal congestion of an edge; see e.g.\ \cite[\S13.4]{levin} for a gentle introduction to this method). The spectral profile is also amenable to the method of comparison of Dirichlet forms.    It is shown in \cite{HP} that  for all irreducible Markov chains on a finite state space
\[\frac{c_{\mathrm{LS}}}{17} \le \min_{\delta \in [\pi_{*} ,1/2]} \frac{\Lambda(\delta)}{\log (1/\delta)} \le  c_{\mathrm{LS}}.  \]
\begin{corollary}
\label{cor:LS}
Let $c_2$ be the absolute constant from \eqref{e:3}. Then for all irreducible reversible Markov chains on a finite state space
\[\frac{c_{\mathrm{LS}}}{17} \le \min_{\delta \in [\pi_{*} ,1/2]} \frac{ \kappa(\delta)}{\log (1/\delta)} \le  \frac{c_{\mathrm{LS}}}{c_2}. \]
\end{corollary}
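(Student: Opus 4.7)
The plan is that this corollary is essentially immediate once Theorem~\ref{thm:2} is in hand, since it reduces a statement about $\kappa(\delta)$ to the analogous statement about $\Lambda(\delta)$ proved in \cite{HP}. I would simply apply the two-sided bound \eqref{e:3} pointwise in $\delta$ and then minimize over $\delta \in [\pi_*,1/2]$, invoking the cited characterization
\[\frac{c_{\mathrm{LS}}}{17} \le \min_{\delta \in [\pi_{*} ,1/2]} \frac{\Lambda(\delta)}{\log (1/\delta)} \le  c_{\mathrm{LS}}\]
as a black box on both sides.

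Concretely, the lower bound in \eqref{e:3} gives $\Lambda(\delta)\le \kappa(\delta)$ for every admissible $\delta$, so dividing by $\log(1/\delta)>0$ (for $\delta \le 1/2$) and taking the minimum over $\delta\in[\pi_*,1/2]$ yields
\[\frac{c_{\mathrm{LS}}}{17}\;\le\; \min_{\delta\in[\pi_*,1/2]}\frac{\Lambda(\delta)}{\log(1/\delta)}\;\le\; \min_{\delta\in[\pi_*,1/2]}\frac{\kappa(\delta)}{\log(1/\delta)},\]
which is the asserted lower bound. For the upper bound, the right inequality in \eqref{e:3} gives $\kappa(\delta)\le c_2^{-1}\Lambda(\delta)$ pointwise, so
\[\min_{\delta\in[\pi_*,1/2]}\frac{\kappa(\delta)}{\log(1/\delta)}\;\le\; \frac{1}{c_2}\min_{\delta\in[\pi_*,1/2]}\frac{\Lambda(\delta)}{\log(1/\delta)}\;\le\;\frac{c_{\mathrm{LS}}}{c_2},\]
again by the \cite{HP} bound.

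There is no genuine obstacle here: all the work is carried by Theorem~\ref{thm:2} and the previously established spectral-profile characterization of $c_{\mathrm{LS}}$. The only small point worth verifying is that the minimum in the $\kappa$-version is taken over the same range $[\pi_*,1/2]$ and that the restriction to irreducible $B$ appearing in the definition of $\kappa(\delta)$ is compatible with the definition of $\Lambda(\delta)$; as noted after \eqref{e:spdefintro1}, $\Lambda(\delta)$ can itself be computed using only irreducible $B$, so the two minimizations range over the same family of sets and the pointwise comparison \eqref{e:3} applies directly.
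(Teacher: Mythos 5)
Your proposal is correct and is exactly the intended argument: the corollary follows by applying the pointwise two-sided comparison \eqref{e:3} on $[\pi_*,1/2]$ and plugging it into the cited spectral-profile characterization of $c_{\mathrm{LS}}$ from \cite{HP}, which is why the paper states it without a separate proof. Your remark about the irreducibility restriction being harmless (since $\Lambda(\delta)$ is unchanged when restricted to irreducible sets, as noted after \eqref{e:spdefintro1}) is the only point needing verification, and you handled it correctly.
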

\noindent We now describe the main results from \cite{spectral}. These results do not assume reversibility, and we shall describe them in their full generality. It is proved in \cite{spectral} for $\eps \in (0,1]$ that the $\varepsilon$ $\ell^{\infty}$ mixing time
\begin{equation}
\label{e:tmixinftydef}
t_{\mathrm{mix}}^{(\infty)}(\eps):=\min \left\{t: \max_{x,y} \left|\frac{P_t(x,y)-\pi(y)}{\pi(y)} \right| \le \eps \right\}
\end{equation}
 of the continuous-time Markov chain  whose time-$t$ transition probabilities are given by the matrix $P_t:=e^{t(P-I)}$  (where $P$ is not assumed to be reversible) satisfies that 
\begin{equation}
\label{e:goeletal}
 t_{\mathrm{mix}}^{(\infty)} (\eps)\le 8 \log (e/\varepsilon) \rel \left(\frac{P+P^{*}}{2} \right)+8\int_{1/\pi_*}^{1/2}\frac{\mathrm{d}\delta}{\delta \widehat\Lambda_{P}(\delta)},
\end{equation}
where throughout, when we include a transition matrix $Q$ in some quantity like $\rel(Q)$ or $\widehat \Lambda_{Q}$, we do so to indicate that the quantity is taken w.r.t.\ $Q$.
In discrete time (i.e., when $P_t(x,y)$ in \eqref{e:tmixinftydef} is replaced with $P^t(x,y)$) they prove that $t_{\mathrm{mix}}^{(\infty)}(\eps) \le 16\log (e/\varepsilon)\rel(PP^{*})+16\int_{1/\pi_*}^{1/2}\frac{\mathrm{d}\delta}{\delta \widehat \Lambda_{PP^{*}}(\delta)}$. Moreover, they proved that  $\min_{x}P(x,x) \rel(PP^{*}) \le   \rel \left(\frac{P+P^{*}}{2} \right)$ and that
\begin{equation}
\label{e:lazinessinsp}
\text{for all } \delta, \quad \widehat \Lambda_{PP^{*}}(\delta) \ge  \widehat  \Lambda_{P}(\delta) \min_{x}P(x,x).
\end{equation}
Kozma \cite{Kozma} showed that for reversible chains \eqref{e:goeletal}  is sharp up to a $O(\log \log (1/\pi_* ))$ factor. 

Before giving the definition of $\widehat \Lambda$ we require some additional definitions and notation. Let $f,g,h \in \mathbb{R}^V$ and $p \in [1,\infty)$. We define the inner product $\langle \cdot,\cdot \rangle_{\pi}$ on $\mathbb{R}^V$ associated  with $\pi$, the  stationary expectation and variance and the $\ell^p$ norm of $f $ to be  $\|f\|_p:=(\mathbb{E}_{\pi}[|f|^{p}])^{1/p}$,  \[\mathbb{E}_{\pi}[g]:=\sum_{x}\pi(x)g(x), \quad   \langle g,h \rangle_{\pi}:=\mathbb{E}_{\pi}[gh] \quad \text{and} \quad \mathrm{Var}_{\pi}f:=\mathbb{E}_{\pi}[f^{2}]-(\mathbb{E}_{\pi}[f])^{2}.
\]
 The $\ell^{\infty}$ norm of $f$ is defined as  $\|f\|_{\infty}:=\max_{v \in V}|f(v)|$. The $\ell^p$ norm $\|\cdot\|_{p,\pi}$  on the space of signed-measures on $V$ is defined via the relation $\|\sigma\|_{p,\pi}=\|\frac{\sigma}{\pi }\|_p$, where $\frac{\sigma}{\pi }\in \mathbb{R}^V$ is given by $\frac{\sigma}{\pi }(x):=\frac{\sigma(x)}{\pi (x) }$. Let  $\mathcal{P}(V)$ be the collection of probability measures on $V$. 

Denote the support of $f \in \mathbb{R}^V $ by $\mathrm{supp}(f):=\{x \in V :f(x) \neq 0\}$. Let  \[ C_0(B):=\{f \in \mathbb{R}^V:\mathrm{supp}(f) \subseteq B \} \quad \text{and} \quad C_0^{+}(B):=\{f \in C_0(B):f \ge 0  \}.\] 
The spectral profile corresponding to $P$ is  defined in  \cite{spectral}  to be
\begin{equation}
\label{e:spwidehatdefintro}
\widehat \Lambda(\delta)=\widehat \Lambda_P(\delta) :=\inf\{\widehat \lambda(B): \pi(B) \le \delta \}, \quad \text{where}
\end{equation}
\begin{equation}
\label{e:widehatlambdaBdef}
\widehat \lambda(B)=\widehat \lambda_{P}(B):=\min\{\langle (I-P)f,f \rangle_{\pi}/\mathrm{Var}_{\pi}f : \mathrm{Var}_{\pi}f  \neq 0,f \in C_0(B) \}. 
\end{equation}
Note that for all $\delta \ge 1$ we have that $\widehat \Lambda(\delta)=\widehat \Lambda (1)$ is the spectral gap of $\frac{P+P^{*}}{2}$.

\subsection{The non-reversible setup}
\label{s:nonrevintro}
We now drop the assumption of reversibility.\footnote{While we presented some background on the spectral profile without assuming reversibility, Theorems \ref{thm:1} and \ref{thm:2} and Corollary \ref{cor:LS} assumed $P$ is reversible. The next theorem will not include this assumption. } We shall only assume that $P$ is irreducible and as always denote its stationary distribution by $\pi$ and its finite state space by $V$. Recall that
the \emph{time-reversal} of $P$, denoted by  $P^{*}$, is   given by the relation $\pi (x)P^{*}(x,y)=\pi(y)P(y,x)$ for all $x,y \in V$. It is uniquely defined also via the relation $\langle Pf,g \rangle_{\pi}=\langle f,P^{*}g \rangle_{\pi}$ for all $f,g \in \mathbb{\mathbb{R}}^V$. It is  easy to see that since $\pi$ is the stationary distribution of $P$, and $P$ is stochastic and irreducible  (in particular, $\pi(x)>0$ for all $x$), then $P^{*}$ is also an  irreducible, stochastic matrix, and that $\pi$ is its  stationary distribution. Moreover, $(P^*)^*=P$ and $(P^k)^*=(P^*)^k$ for all $k \in \mathbb{N}$.   

Recall that for $\eset \neq  B \subsetneq V$ we denote the restriction of $P$ to $B$ by $P_B$.  We can similarly define $(P_B)^*$ via the relation $\pi (x)(P_{B})^{*}(x,y)=\pi(y)P_{B}(y,x)$ for all $x,y \in V$. Clearly,   $(P_B)^*=(P^{*})_B$ and hence we shall simply write $P_B^*$.

Let   $\mathbf{X}=(X_k)_{k=0}^{\infty}$ be a realization of the Markov chain with transition matrix $P$.
Let $t \in [1,\infty) $. Let $\eta_t +1\sim \mathrm{Geometric}(1/t)$ (i.e.\  $\mathbb{P}(\eta_t=i)=t^{-1}(1-1/t)^{i}$ for all $i \in \Z_+$)   be independent of    $\mathbf{X}$. We define
\begin{equation}
\label{e:defofQgeomp20}
\begin{split}
P^{\mathrm{G}(t  )}(x,y) & :=\mathbb{P}_x[X_{\eta_t   }=y]=\sum_{m \in \Z_+}P^m(x,y)\mathbb{P}[\eta_t=m] =\frac{1}{t} \sum_{m \in \mathbb{Z}_+}P^m(x,y)\mathbb{P}[\eta_t\ge m] .
 \end{split}
\end{equation}

Let $1^{\perp}:=\{f \in \mathbb{\mathbb{R}}^V:\mathbb{E}_{\pi}[f]=0 \}$. Let $Q$ be an irreducible transition matrix whose stationary distribution is $\pi$. It is easy to see that $1^{\perp}$ is a $Q$ invariant subspace of $\mathbb{R}^V$. Let $\Pi$ be the transition matrix that all of its rows are equal to $\pi$. The operator norm of $Q-\Pi$  is defined to be
\begin{equation}
\label{e:introdefofoperatornorm}
\begin{split}
\|Q-\Pi\| & :=\max \{\|(  Q-\Pi )f\|_{2}  : f \in \mathbb{\mathbb{R}}^V, \|f \|_{2}=1  \}
\\ & = \max \{\|  Qf\|_{2}  : f \in  1^{\perp}, \|f \|_{2}=1  \}=( \max \{  \mathrm{Var}_{\pi}Qf  :   \mathrm{Var}_{\pi}f=1    \})^{1/2}. 
\end{split}
\end{equation}
It follows from the second equality in the last display that for all $k \in \mathbb{N}$,
\begin{equation}
\label{e:submulton}
\|Q^k-\Pi\| \le \|Q-\Pi\|^k.
\end{equation}

The \emph{multiplicative reversibilization} of a stochastic  matrix $Q$ on $V$ whose stationary distribution is $\pi$ is defined to be $S_Q:=Q^*Q$.  Note that $(S_Q)^*=S_Q$.  Since $ Q$ and $ Q^*$ are stochastic and have $\pi$ as their stationary distribution, we have that $\Pi Q=\Pi=\Pi Q^*$,  $Q\Pi =\Pi=Q^*\Pi $. This, together with   $\Pi=\Pi^{*}$, implies that $Q^*Q-\Pi=S_{Q-\Pi}$ and  $QQ^*-\Pi=S_{Q^{*}-\Pi}$.  It is standard that 
\begin{equation}
\label{e:introdefofoperatornorm2}
 \|Q-\Pi \|^2 =\|S_{Q-\Pi}\|= \beta_{2}(Q^*Q)=\beta_{2}(QQ^*)=\|S_{Q^{*}-\Pi}\|=\|Q^{*} -\Pi\|^2,
\end{equation}
where for a reversible transition matrix $P$ we denotes the $i$th largest eigenvalue of $P$ by   $\beta_{i}(P)=1-\lambda_i(P)$ and its relaxation time by $ \rel(P)=\frac{1}{1-\beta_2(P)}$.
 
We shall prove a version of Theorem \ref{thm:1} with the following quantity replacing the relaxation time of $P$ (recall that $S_{P^{\mathrm{G}( t)}}=(P^{\mathrm{G}( t)})^{*}P^{\mathrm{G}( t)}=(P^*)^{\mathrm{G}( t)}P^{\mathrm{G}( t)}$):
\begin{equation}
\label{e:introdefotrelgeom}
\rel^{\text{Geom}}(\delta) :=\inf \{t \ge1 :\beta_2(S_{P^{\mathrm{G}( t)}}) \le \delta^2 \}=\inf \{t \ge 1:\|P^{\mathrm{G}( t)}-\Pi\| \le \delta \}.
\end{equation}
It follows from Lemma \ref{lem:Geomsubmultiplicativity} that the map $t \mapsto \|P^{G(t)} -\Pi\|  $ is continuous and non-increasing\footnote{In fact, with additional effort one can even deduce from Lemma \ref{lem:Geomsubmultiplicativity} that the map $t \mapsto \|P^{G(t)} -\Pi\|  $  is strictly decreasing in $t$, but we shall not require this.} in $t$, and that for all $\eps \in (0,1/2]$,
\begin{equation}
\label{e:introdefotrelgeom'}
\frac{\eps}{1-\eps} \rel^{\mathrm{Geom}} (\eps) \le \rel^{\mathrm{Geom}} (1/2) \le \frac{1-\eps}{\eps} \rel^{\mathrm{Geom}} (1-\eps).  
\end{equation}
In particular, the choice of $\eps=1/e$ in Theorem \ref{thm:3} is arbitrary.

Recall that $ t_{\mathrm{H}}^\pi= \max_{A \, :\, \pi(A) \ge 1/2}\mathbb{E}_{\pi_{A^c}}[T_A] $.

\begin{theorem}
\label{thm:3}
There exists an  absolute constant $c_{3}>0$ such that for every irreducible  Markov chain on a finite state space we have that
\begin{equation}
\label{e:3T3}
c_{3}\rel^{\mathrm{Geom}} (1/e)\le t_{\mathrm{H}}^\pi\le \frac{1}{1-\sqrt{b} } \rel^{\mathrm{Geom}}(1/e), \quad \text{ where} \quad b:=\frac{1+e^{-2}}{2} . 
\end{equation} 
\end{theorem}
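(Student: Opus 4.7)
The plan is to prove the two directions separately; the upper bound reduces to a direct operator-theoretic calculation, while the lower bound is the main obstacle. For the upper bound, set $t := \rel^{\mathrm{Geom}}(1/e)$ so that $\|P^{\mathrm{G}(t)}-\Pi\| \le 1/e$, and fix any $A \subsetneq V$ with $\pi(A) \ge 1/2$. The key estimate is that for every $f \in C_0(A^c)$,
\begin{equation*}
\|P^{\mathrm{G}(t)} f\|_2^2 \le b \|f\|_2^2, \qquad b := (1+e^{-2})/2.
\end{equation*}
To prove it, decompose $f = (\mathbb{E}_\pi f)\mathbf{1} + \tilde f$ with $\tilde f \in 1^\perp$; the splitting $P^{\mathrm{G}(t)} f = (\mathbb{E}_\pi f)\mathbf{1} + P^{\mathrm{G}(t)}\tilde f$ is orthogonal in $\ell^2(\pi)$ (since $P^{\mathrm{G}(t)}\tilde f \in 1^\perp$), and $\|P^{\mathrm{G}(t)}\tilde f\|_2 = \|(P^{\mathrm{G}(t)}-\Pi)\tilde f\|_2 \le (1/e)\|\tilde f\|_2$ by the hypothesis on the operator norm. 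Since $f$ is supported on $A^c$, Cauchy--Schwarz gives $(\mathbb{E}_\pi f)^2 = \langle f,\mathbf{1}_{A^c}\rangle_\pi^2 \le \pi(A^c)\|f\|_2^2 \le \tfrac{1}{2}\|f\|_2^2$, and combined with $\|\tilde f\|_2^2 = \|f\|_2^2 - (\mathbb{E}_\pi f)^2$ the bound follows.

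Next, let $\eta_t^{(1)}, \eta_t^{(2)}, \dots$ be i.i.d.\ copies of $\eta_t$ independent of $\mathbf{X}$, set $\tau_k := \sum_{i \le k}\eta_t^{(i)}$ and $Y_k := X_{\tau_k}$, so $\mathbf{Y}$ is Markov with transition matrix $P^{\mathrm{G}(t)}$. The substochastic restriction $K := (P^{\mathrm{G}(t)})_{A^c}$ satisfies $\|Kf\|_2 \le \|P^{\mathrm{G}(t)} f\|_2 \le \sqrt{b}\,\|f\|_2$ on $C_0(A^c)$, so iterating with $f = \mathbf{1}_{A^c}$ gives $\|K^k\mathbf{1}_{A^c}\|_2 \le b^{k/2}\sqrt{\pi(A^c)}$, and Cauchy--Schwarz yields
\begin{equation*}
\mathbb{P}_{\pi_{A^c}}[T_A^Y > k] = \tfrac{1}{\pi(A^c)}\langle \mathbf{1}_{A^c}, K^k \mathbf{1}_{A^c}\rangle_\pi \le b^{k/2},
\end{equation*}
where $T_A^Y := \inf\{k \ge 0: Y_k \in A\}$. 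Summing gives $\mathbb{E}_{\pi_{A^c}}[T_A^Y] \le 1/(1-\sqrt b)$. Since $X_{\tau_{T_A^Y}} = Y_{T_A^Y} \in A$ forces $T_A \le \tau_{T_A^Y}$ almost surely, Wald's identity applied to the stopping time $T_A^Y$ (with $\mathbb{E}[\eta_t] = t-1$) gives $\mathbb{E}_{\pi_{A^c}}[T_A] \le (t-1)\,\mathbb{E}_{\pi_{A^c}}[T_A^Y] \le t/(1-\sqrt b)$, and maximizing over $A$ completes the upper bound.

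The lower bound $c_3 \rel^{\mathrm{Geom}}(1/e) \le t_{\mathrm{H}}^\pi$ is the main obstacle. A na\"ive attempt to apply Theorem~\ref{thm:1} to the reversible multiplicative reversibilization $S_{P^{\mathrm{G}(t)}}$ is doomed, because at $t = \rel^{\mathrm{Geom}}(1/e)$ one only has $\rel(S_{P^{\mathrm{G}(t)}}) \ge e^2/(e^2-1)$, which is an absolute constant and thus too weak to drive a lower bound of order $t$ on the hitting time. Instead, I would take $t'$ slightly below $\rel^{\mathrm{Geom}}(1/e)$ so that $\|P^{\mathrm{G}(t')}-\Pi\| > 1/e$, producing an $f \in 1^\perp$ with $\|P^{\mathrm{G}(t')} f\|_2 > \|f\|_2/e$. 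The crucial step---and the main technical hurdle---is to extract from $f$ a set $A \subsetneq V$ with $\pi(A) \ge 1/2$ whose \emph{indicator} $\mathbf{1}_{A^c}$ retains, up to an absolute multiplicative constant, the slow $P^{\mathrm{G}(t')}$-averaging property of $f$. I would handle this by a Cheeger-type threshold argument tailored to geometric-time averaging, in the spirit of the hitting-time-via-time-averaging framework of Peres and Sousi~\cite{PS}. Given such an $A$, a direct lower bound on $\mathbb{P}_{\pi_{A^c}}[T_A > m]$ at the scale $m = \Theta(t')$ yields $\mathbb{E}_{\pi_{A^c}}[T_A] \ge c_3 t'$, and continuity of $t \mapsto \|P^{\mathrm{G}(t)}-\Pi\|$ (Lemma~\ref{lem:Geomsubmultiplicativity}) lets us pass to the limit $t' \uparrow \rel^{\mathrm{Geom}}(1/e)$.
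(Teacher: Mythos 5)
Your upper-bound argument is correct and is essentially the paper's: the direct orthogonal-decomposition computation showing $\|P^{\mathrm{G}(t)}f\|_2^2\le b\|f\|_2^2$ for $f\in C_0(A^c)$ is just an unpacked version of the chain $\|Q_B\|^2\le\|(Q^*Q)_B\|$ and $1-\|(Q^*Q)_B\|\ge\pi(A)(1-e^{-2})$ that the paper quotes, and your geometric-subsampling plus Wald step is identical to the paper's.

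The lower bound, however, is where the real content of the theorem lies, and your proposal leaves it unproved. You correctly identify the crucial step --- extracting from a function $f$ witnessing $\|P^{\mathrm{G}(t')}-\Pi\|>1/e$ a set whose indicator retains the slow-averaging property --- but you only gesture at it (``a Cheeger-type threshold argument''), and a generic threshold/co-area argument does not deliver it. First, thresholding $f$ naturally controls return-type quantities such as $\mathbb{P}_{\pi_D}[X_{\eta_t}\in D]$, whereas what you need is the no-escape probability $\mathbb{P}_{\pi_D}[T_{D^c}>\eta_t]$; converting the former into the latter is exactly the hard content of the machinery behind Theorem \ref{thm:2}, and the paper stresses in \S\ref{s:SP} that the absence of the quadratic, Cheeger-type loss in this conversion is precisely the surprising point. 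Second, the paper does not work at the contraction level $1/e$: it sets $t=\rel^{\mathrm{Geom}}(1-c)$ with $c=1/\sqrt{20}$, because the superlevel-set selection (Lemma \ref{lem:good-superlevelset'}) only gains the factor $20/17$, so the per-step loss $(1-c^2)^{-1}$ must be very close to $1$ for the bookkeeping to close; with your choice the analogous argument fails numerically, and one converts back to $\rel^{\mathrm{Geom}}(1/e)$ via \eqref{e:introdefotrelgeom'} only at the end. Third, since $P$ is not reversible one cannot run a one-sided martingale with $f$; the paper instead takes an eigenfunction of $KK^*$, normalizes it as $\mu/\pi_B$, makes the (self-described ``subtle'') choice of superlevel set $L$ satisfying simultaneously the first-moment lower bound \eqref{e:lem3.1'} and the second-moment upper bound \eqref{e:lem3.12'}, and then runs a forward-then-time-reversed auxiliary chain $Y$ with a Cauchy--Schwarz argument exploiting $\mathbb{E}_x[f(Y_{\eta_t+\eta_t^*})]=(1-c^2)f(x)$. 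None of these ingredients appears in your sketch, so as written the lower bound is a genuine gap rather than a routine technicality.
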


\subsubsection{The necessity of time averaging in Theorem \ref{thm:3}}
\label{s:ncycle}
We now discuss the necessity of the time averaging in our definition of $\rel^{\mathrm{Geom}} (1/e)$. Before discussing a natural quantity, which is similar to  $\rel^{\mathrm{Geom}} (1/e)$ but does not involve time averaging, we first recall that  the $\eps$ $\ell^{2}$ mixing time is defined as     \[t_{\mathrm{mix}}^{(2)}(\eps):=\min \{t: \max_x \|\mathbb{P}_x[X_t = \cdot] -\pi\|_{2,\pi} \le \eps \}.\] It is proved in \cite{spectral}  that $t_{\mathrm{mix}}^{(\infty)}(\eps) \le 2t_{\mathrm{mix}}^{(2)}(\sqrt{\varepsilon})$ (under reversibility this is an equality in the continuous time setup \cite[Proposition 4.15]{levin}). In particular $t_{\mathrm{mix}}^{(\infty)}(1/4) \le 2 t_{\mathrm{mix}}^{(2)}(1/2) $.  

Another extension of the notion of the relaxation time to the non-reversible setup, which is in fact more natural from the perspective of its relation with the total variation and $\ell^{\infty}$ mixing times of the chain, is the $\eps$ \emph{pseudo relaxation time}
\[t_{\mathrm{rel}}^{\mathrm{ps}}(\eps):=t_{\mathrm{rel}}^{\mathrm{ps}}(P,\eps):=\inf \{k:\beta_2(S_{P^{k}}) \le \eps^2 \}=\inf \{k:\|P^{k}-\Pi\| \le \eps \}. \]
By \eqref{e:submulton} we have that $t_{\mathrm{rel}}^{\mathrm{ps}}(\eps^{2}) \le 2 t_{\mathrm{rel}}^{\mathrm{ps}}(\eps)$ for all $\eps$. Using this, one can deduce that (see \cite{Paulin}) 
\[t_{\mathrm{mix}}^{(\infty)}(1/4) \le 2 t_{\mathrm{mix}}^{(2)}(1/2) \lesssim t_{\mathrm{rel}}^{\mathrm{ps}}(1/e)  \log(1/\pi_*)  \quad \text{and that} \quad t_{\mathrm{rel}}^{\mathrm{ps}}(1/e) \asymp 1/\gamma_{\mathrm{ps}},  \]
where $\gamma_{\mathrm{ps}}:=\max \left\{\frac{1-\beta_2(S_{P^{k}}) }{k}:k \ge 1 \right\}$ is the \emph{pseudo spectral gap}, introduced by Paulin \cite{Paulin}. In  \cite{Paulin} Paulin demonstrated that in several senses $\gamma_{\mathrm{ps}}$ serves as a good extension in the non-reversible setup of the notion of the absolute spectral gap. One of them is \cite[Prop 3.3]{Paulin} 
 \[\left(\gamma_{\mathrm{ps}}^{-1}-1 \right) \log 2\le t_{\mathrm{mix}}^{\TV} \quad \text{for all }\eps \in (0,1). \] 
It is fairly straightforward to verify using \eqref{e:introdefotrelgeom'} and  $t_{\mathrm{rel}}^{\mathrm{ps}}(\eps^{2}) \le 2 t_{\mathrm{rel}}^{\mathrm{ps}}(\eps)$  that \[\rel^{\mathrm{Geom}} (1/e) \lesssim  t_{\mathrm{rel}}^{\mathrm{ps}}(1/e) \asymp \gamma_{\mathrm{ps}}^{-1}.\]

Consider a biased random walk on the $n$-cycle, whose transition matrix is given by $P(i,i+1)=1/3=2P(i,i-1)$ (where $i \pm 1$ are defined mod $n$) and $P(i,i)=\frac 12$ for all $i$. It is non-reversible. Since $P(i,i) =\frac 12$ for all $i$ and $PP^*=P^*P$ we have that \[\gamma_{\mathrm{ps}} \asymp 1-\beta_2(P^*P) \asymp 1-\beta_2 \left(\frac 12(P^*+P) \right) \asymp n^{-2} \asymp \left( t_{\mathrm{H}}^{\pi}  \right)^{-2}.  \] 
It follows that we cannot replace $\rel^{\mathrm{Geom}} (1/e)$ in \eqref{e:3T3} with $1/\gamma_{\mathrm{ps}}$ nor with $ t_{\mathrm{rel}}^{\mathrm{ps}}(1/e) $.
\subsection{Countable state space}
\label{s:countableintro}
In this subsection we  present an extension of Theorem \ref{thm:2} to the setup that $V$ is countably infinite  and $P$ is irreducible and is reversible w.r.t.\ $\pi$. We allow for the case that there exists $B \subset V$ such that $\pi(B)<\infty$ and $|B|=\infty$. In particular  $\pi_*:=\inf_{v \in V}\pi(v)$ may equal zero. 
In this section we need to modify the meaning of some previously introduced notation. To avoid  clash of notation, we shall prove all results concerning the case that $V$ is infinite  in the last section of the paper. 

  Let us first note that $\pi_D$ is still well-defined by the expression $\pi_D(x)=\frac{\pi(x)\mathbf{1}\{x \in D\}}{\pi(D)}$ as long as $\pi(D)<\infty$. We also note that the definitions of the $\ell^p$ norm of $f:V \to \mathbb{R}$ for $p \in [1,\infty)$ and of the inner product $\langle \cdot,\cdot \rangle_{\pi}$ are still valid if we interpret $\mathbb{E}_{\pi}[g]$ as $\sum_{v \in V}\pi(v) g(v)$. We define $\|f\|_{\infty}:=\sup_{v \in V}|f(v)|$.   
 We write $\ell^p(V,\pi)$ for the collection of all  $f:V \to \mathbb{R}$ satisfying that $\|f\|_p<\infty$.   

As before, for $B \subset V$ let $P_B$ be the restriction of $P$ to $B$. As before, it is convenient to  think of $P_B$ as being labeled by $V$  by setting $P_B(x,y):=P(x,y)\mathbf{1}\{x,y \in B\}$ for all $x,y \in V$.  The operator norm of $P_B$ is given by $\|P_B\|:=\sup \{\|P_B f\|_2 : f \in  \ell^2(V,\pi), \|f\|_2=1 \}$. 

We now modify the definition of $\Lambda(\cdot)$. Recall that $ \inf \eset=\infty$.
 For $u>0 $ we define  
 \[\Lambda(u):=\inf \{1-\|P_B\|  :B \subset V,  0<\pi(B) \le u \}.\] We extend the definition of $\kappa(u)$ for $u>0 $ to the current setup by setting:  \[\kappa(u):=\inf \left\{1/\mathbb{E}_{\pi_{B}}[T_{B^{c}}] :B \subset V,\, 0<\pi(B) \le u  \right\}.  \]
Recall that for a finite $D \subset V$ we write $\beta(D)=1-\la(D)$ for the largest eigenvalue of $P_D$. For $B \subseteq V$, let  $\mathrm{FI}(B)$ be the collection of all non-empty, finite, irreducible subsets of $B$, and for $u > 0$ let \[\mathrm{FI}(B,u):=\{D \in  \mathrm{FI}(B): \pi(D) \le u  \}.\]
 Recall that for any finite $D$ we have that $\|P_D\|=\beta(D)=1-\la(D)$, where $\beta(D)$ is the largest eigenvalue of $P_D$.
 \begin{theorem}
\label{thm:LambdaPucountable} Consider the above setup. Let $B \subset V$ be such that $0<\pi(B)<\infty$. Then
\begin{equation}
\label{e:countable1}
\|P_B\|  =\sup \{\beta(D):D \in \mathrm{FI}(B) \}.
\end{equation}
Consequently, 
\begin{equation}
\label{e:countable2}
\Lambda(u)=\inf \{\lambda(D): D \in \mathrm{FI}(V,u)  \}, \; \text{for all }u.
\end{equation}
Moreover, if in addition $B$ is also irreducible, then we  have that
\begin{equation}
\label{e:countable3}
\mathbb{E}_{\pi_{B}}T_{B^c}  =\sup \{\mathbb{E}_{\pi_{D}}T_{D^c}  : D \in \mathrm{FI}(B) \}.
\end{equation}
Consequently,
there exists an absolute constant $c>0$ (independent of $P$ and $B$) such that
\begin{equation}
\label{e:countable4}
c \left(1-\|P_B\|  \right)^{-1} \le \sup_{D \in \mathrm{FI}(B) }\mathbb{E}_{\pi_{D}}[T_{D^c}] \le \left(1-\|P_B\|  \right)^{-1}.
\end{equation} 
\begin{equation}
\label{e:countable5}
\kappa(u)=\inf \left\{\frac{1}{\mathbb{E}_{\pi_{D}}T_{D^c}}  : D \in \mathrm{FI}(V,u)  \right\}, \quad \text{for all }  u, \quad \text{and}
\end{equation}
\begin{equation}
\label{e:countable6}
c \kappa(u) \le \Lambda(u)\le  \kappa(u),  \quad \text{for all }u.
\end{equation} 
\end{theorem}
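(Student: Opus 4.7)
My plan is to reduce everything to the finite case via exhaustion by finite irreducible subsets. When $B$ is irreducible, I fix an enumeration $\{x_i\}_{i\ge 1}$ of $B$ and construct an increasing sequence $D_n\in\mathrm{FI}(B)$ with $\bigcup_n D_n=B$: take $D_n$ to be the union, over all $i,j\le n$, of some finite path in $B$ from $x_i$ to $x_j$ (which exists by irreducibility, despite potentially infinite single-step branching). Each $D_n$ is finite and irreducible, and this sequence serves as the backbone for every approximation argument below.

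For \eqref{e:countable1}, reversibility makes $P_B$ self-adjoint on $\ell^2(V,\pi)$, so $\|P_B\|$ equals $\sup\{|\langle P_B f,f\rangle_\pi|/\|f\|_2^2:\mathrm{supp}(f)\subseteq B,\,f\neq 0\}$. The ``$\ge$'' direction plugs in the Perron eigenvector of $P_D$ for each $D\in\mathrm{FI}(B)$, extended by zero, and observes $\langle P_B f,f\rangle_\pi=\beta(D)\|f\|_2^2$. For ``$\le$'', given a near-optimal $f$, take any exhaustion $E_n$ of $B$ by finite sets and set $f_n:=f\mathbf{1}_{E_n}$; since $\pi(B)<\infty$ implies $|f|\in\ell^2$, dominated convergence gives $f_n\to f$ in $\ell^2$ and hence $\langle P_B f_n,f_n\rangle_\pi\to\langle P_B f,f\rangle_\pi$. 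The cross-term identity $\langle P_B f_n,f_n\rangle_\pi=\langle P_{E_n}f_n,f_n\rangle_\pi$ combined with $\|P_{E_n}\|=\beta(E_n)=\max_C\beta(C)$ taken over the (finite irreducible) components $C$ of $E_n$, all in $\mathrm{FI}(B)$, completes the bound after taking $n\to\infty$. Display \eqref{e:countable2} then follows: any $D\in\mathrm{FI}(V,u)$ may be taken as $B$ in the infimum defining $\Lambda(u)$, and conversely \eqref{e:countable1} reduces each candidate $B$ to its finite irreducible subsets.

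For \eqref{e:countable3}, I work along the exhausting sequence $D_n$ just constructed. One first verifies $T_{D_n^c}\uparrow T_{B^c}$ almost surely: on $\{T_{B^c}<\infty\}$ the finite pre-exit trajectory is eventually contained in $D_n$, while on $\{T_{B^c}=\infty\}$, for every fixed $N$ the trajectory $(X_0,\dots,X_N)$ eventually lies in $D_n$, so $T_{D_n^c}\to\infty$. Monotone convergence then yields $\mathbb{E}_x T_{D_n^c}\uparrow\mathbb{E}_x T_{B^c}$, and since $h_n(x):=\pi(x)\mathbf{1}_{D_n}(x)\mathbb{E}_x T_{D_n^c}$ is pointwise nondecreasing with pointwise limit $\pi(x)\mathbf{1}_B(x)\mathbb{E}_x T_{B^c}$, summing over $V$ via a second monotone convergence (using $\pi(B)<\infty$) and dividing by $\pi(D_n)\uparrow\pi(B)$ gives $\mathbb{E}_{\pi_{D_n}}T_{D_n^c}\to\mathbb{E}_{\pi_B}T_{B^c}$, realizing the supremum in \eqref{e:countable3}. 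The bounds \eqref{e:countable4} then combine this with Theorem~\ref{thm:2}: the upper bound uses $\mathbb{E}_{\pi_D}T_{D^c}\le 1/\lambda(D)\le 1/(1-\|P_B\|)$ via \eqref{e:almost2} and \eqref{e:countable1}; for the lower bound, given $\epsilon>0$ pick $D\in\mathrm{FI}(B)$ with $\lambda(D)<(1-\|P_B\|)+\epsilon$ via \eqref{e:countable1}, and apply Theorem~\ref{thm:2} on $D$ to exhibit a sub-maximizer, which may be taken irreducible without loss since a reducible optimizer can be replaced by its best component.

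Finally, \eqref{e:countable5} and \eqref{e:countable6} follow routinely. For \eqref{e:countable5}, the ``$\le$'' direction is trivial since each $D\in\mathrm{FI}(V,u)$ appears in the family defining $\kappa(u)$; the ``$\ge$'' direction uses \eqref{e:countable3} after first reducing candidates $B$ in the $\kappa$-infimum to irreducible ones by component decomposition (for reducible $B=\sqcup_i B_i$, starting from $\pi_B$ the chain cannot cross between components without leaving $B$, so $\mathbb{E}_{\pi_B}T_{B^c}$ is a convex combination of $\mathbb{E}_{\pi_{B_i}}T_{B_i^c}$, whence $\min_i 1/\mathbb{E}_{\pi_{B_i}}T_{B_i^c}\le 1/\mathbb{E}_{\pi_B}T_{B^c}$). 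Then \eqref{e:countable6} follows by combining \eqref{e:countable2}, \eqref{e:countable5}, and applying Theorem~\ref{thm:2} to each finite $D\in\mathrm{FI}(V,u)$. The delicate step I expect is the construction of the exhausting $D_n$ together with the almost-sure identification $T_{D_n^c}\uparrow T_{B^c}$ separately on $\{T_{B^c}<\infty\}$ and $\{T_{B^c}=\infty\}$; beyond that, the rest is bookkeeping built on the finite-state results.
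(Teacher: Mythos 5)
Your proposal is correct and follows essentially the same route as the paper: reduce $\|P_B\|$ to finite irreducible subsets by truncating test functions along an exhaustion, exhaust the irreducible $B$ by finite irreducible sets and pass to the limit of $\mathbb{E}_{\pi_{D_n}}[T_{D_n^c}]$ for \eqref{e:countable3}, and then obtain \eqref{e:countable4}--\eqref{e:countable6} by combining with Theorem~\ref{thm:2} applied to finite sets together with a decomposition into irreducible components. Your explicit path construction of the irreducible exhaustion and your single monotone-convergence bookkeeping with the unnormalized weights $\pi(x)\mathbf{1}_{D_n}(x)\mathbb{E}_x[T_{D_n^c}]$ (in place of the paper's Fatou/dominated-convergence case split) are minor, if slightly cleaner, variants; note that, exactly like the paper's own argument, what you actually prove for \eqref{e:countable3} is the attainment of $\mathbb{E}_{\pi_B}[T_{B^c}]$ as a limit along the exhaustion (i.e.\ the inequality $\mathbb{E}_{\pi_B}[T_{B^c}]\le\sup_{D\in\mathrm{FI}(B)}\mathbb{E}_{\pi_D}[T_{D^c}]$), which is the only direction used in the sequel.
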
    
\subsection{A sharp variant of Cheeger's inequality}
\label{s:SP}
Let us focus on the case that $P$ is reversible and irreducible, $V$ is countably infinite and $\pi(V)=\infty$. We now present a certain surprising conceptual consequence of our results.  As we explain below, it  can be seen as a certain sharp variant of Cheeger's inequality. Cheeger's inequality is also sharp. The below variant is sharper.  A similar result holds when $\pi(V)<\infty$. However, we shall state and prove the result only in the case that $\pi(V)=\infty$.    

We note that $\|1_B\|_2^2=\pi(B)$. The \emph{isoperimetric profile} $\phi:(0,\infty) \to [0,1] \cup \{ \infty\} $ is defined as (note that $\phi(u)=\infty$ if and only if $\pi(v)>u$ for all $v $;  the same holds for $\Lambda(u)$)
\[\phi(u):= \inf \left\{\frac{\langle( I-P)1_{B} ,1_{B} \rangle_{\pi}}{\|1_B\|_2^2} : B \in \mathrm{FI}(V, u)    \right\}=\inf \{\mathbb{P}_{\pi_{B}}[X_0 \in B,X_1 \notin B] : B \in \mathrm{FI}(V, u)\}, \]
where in the rightmost term above we consider the discrete time chain. The \emph{Cheeger constant} of $P$ is defined as $\Phi_*=\lim_{u \to \infty}\phi(u)$.
Since
\[1-\|P_B\|  = \inf \left\{\langle( I-P)f ,f \rangle_{\pi} /\|f\|_2:f \in \mathbb{R}^V, \eset \neq \mathrm{supp}(f) \subseteq B     \right\},\]
by \eqref{e:countable2} we clearly have  for all $u>0$ that\[\phi(u) \ge \inf \left\{\langle( I-P)f ,f \rangle_{\pi} /\|f\|_2: \eset \neq  \mathrm{supp}( f)\in \mathrm{FI}(V, u)     \right\} =\Lambda(u),\]  as $\phi(u)$ is obtained from $\Lambda(u)$ by restricting the collection of test functions considered in the middle term to indicators.     
By a well-known variant of Cheeger's inequality (cf.\ the proof of Lemma 2.4 in \cite{spectral})
\begin{equation}
\label{e:Chegerineqprofile}
\frac 12 \phi(u)^2 \le  \Lambda(u) \le \phi(u), \quad \text{for all} \quad u. 
\end{equation}
 The proof in \cite{spectral} is presented in the setup where $V$ is finite, but it is straightforward to extend it to the case that $V$ is countably infinite, using \eqref{e:countable2}.  Considering simple random walk on $\mathbb{Z}$ shows that it is possible that $\phi(u)^2 \asymp  \Lambda(u)$ for all $u$. 

Below we shall define quantities $\zeta_{\ell}(u)$ and $\chi_{\ell}(u)$. For a fixed $\ell \in \mathbb{N}$,  $\zeta_{\ell}(u)$  resemble $1/\Lambda(u)$. In fact, we will show that $\zeta_{\ell}(u) \le \frac{1}{c\Lambda \left( u/c\right)}$ for some  constant $c =c_{\ell}\in (0,1)$. Moreover, $\chi_{\ell}(u)$ will be obtained from $\zeta_{\ell}(u)$ by restricting to indicator test functions in a similar fashion as  $\phi(u)$ is obtained from  $\Lambda(u)$  by doing so. In particular, similarly to $\Lambda(u) \le \phi(u)$, we trivially have that  $\chi_{\ell}(u) \le \zeta_{\ell}(u)$ for all $\ell$ and all $u>0$. In light of Cheeger's inequality \eqref{e:Chegerineqprofile} and the aforementioned example of simple random walk on $\mathbb{Z}$, one might expect that an opposite inequality to   $\chi_{\ell}(u) \le \zeta_{\ell}(u)$ would involve  $\chi_{\ell}(u)^2$.   Interestingly, the following proposition asserts that for some absolute constants $\ell \in \mathbb{N}$ and $c\in (0,1)$, we have for all $u>0$ that \[ \frac{c}{\Lambda \left( u\right)} \le \chi_{\ell}(u) \le \zeta_{\ell}(u) \le \frac{1}{c\Lambda \left( u/c\right)} \le \frac{\chi_{\ell}(u/c)}{c^{2}}.  \] 
Surprisingly, $\chi_{\ell}(\cdot)$ does not appear with a square in the rightmost term.

We write   $\ell^p(\mathcal{P}(V) ,\pi)$ for the collection of all  distributions $\mu$ on $V$ satisfying that $\|\mu \|_{\pi,p}<\infty$, where as in the case that $V$ is finite, the $\ell^p$ norm $\| \sigma \|_{\pi,p}$ of a signed measure $\sigma$ on $V$ is defined as  $\| \sigma \|_{\pi,p}:=\| \frac{\sigma}{\pi} \|_{ p}$, where $\frac{\sigma}{\pi} \in \mathbb{R}^V$ is defined as $\frac{\sigma}{\pi}(v):=\frac{\sigma(v)}{\pi(v)}$ for all $v \in V$. 
For $t \ge 0$ let $P_t:=e^{t(P-I)}$. We write $\mu_t:=\mu P_t$ and $f_t:=P_tf$. For $\ell \in \N$ and $u>0$ let  
\begin{equation}
\label{e:0ell2decayviasp}
\begin{split}
\zeta_{\ell}(u):=\inf \{t: \|\mu_{t} \|_{2,\pi}^2 & \le 2^{-\ell} \|\mu    \|_{2,\pi}^2 \; \text{ for all } \; \mu \in \ell^2(\mathcal{P}(V) ,\pi) \, \; \text{such that } u\|\mu    \|_{2,\pi}^2 \ge 1  \}  \\
=\inf \{t: \|f_{t}\|_{2}^2 & \le 2^{-\ell}  \|f    \|_{2}^2 \; \text{ for all } \;  f\in \ell^2(V ,\pi)  \, \; \text{such that } u\|f    \|_{2}^2 \ge  \|f    \|_{1}^2   \}.
\end{split}
\end{equation}
The second equality follows from the fact that (by reversibility) $\|\mu P_{t}  \|_{2,\pi}^2  =\| P_{t} \frac{\mu}{\pi}  \|_{2}^2$  for all $\mu \in \mathcal{P}(V)$ and all $t \in \mathbb{R}_+$.  We also consider
for $\ell \in \N$ and $u>0$ \begin{equation}
\label{e:00ell2decayviasp}
\begin{split}
\chi_{\ell}(u): & =\inf \{t: \|\pi_B  P_{t}\|_{2,\pi}^2  \le 2^{-\ell}  \|\pi_B     \|_{2,\pi}^2 \; \text{ for all } \; B \in  \mathrm{FI}(V, u)   \}  \\ & =\inf \{t: \| P_{t}1_{B}\|_{2}^2  \le 2^{-\ell}  \|1_{B}\|_{2}^2 \; \; \text{ for all } \; B \in  \mathrm{FI}(V, u)   \}.
\end{split}
\end{equation}
We note that $\|\pi_B     \|_{2,\pi}^2=\frac{1}{\pi(B)}$ and that $\pi(B)\|1_{B}\|_{2}^2=\|1_{B}\|_{1}^2$ for all $B$ such that $\pi(B)<\infty$. Hence $\chi_{\ell}(u)$ is obtained by restricting the infimum in the definition of $\zeta_{\ell}(u)$ to distributions $\mu$ (respectively, to test functions $f$)  of the form $\pi_B$ (respectively, $1_B$)  for $B \in  \mathrm{FI}(V, u)$.
 
 \begin{proposition}
\label{p:spectral3} 
Consider the above setup and notation. Assume that $\pi(V)=\infty$. Then
\begin{equation}
\label{e:spb201}
\chi_{\ell}(u) \le \zeta_{\ell}(u) \le \frac{\ell \log 2}{ \Lambda \left(u 2^{\ell+2}\right)} \quad \text{for all }u>0 \text{ and } \ell \in \N.
\end{equation}
Moreover, there exist absolute constants $\ell \in \N$ and $c \in (0,1) $ such that for all $u>0$
\begin{equation}
\label{e:spb2}
\chi_{\ell}(u) \ge c/\Lambda \left( u\right) \ge c(\ell \log 2)^{-1} \zeta_{\ell}( 2^{-\ell-2}u) . 
  \end{equation}
\end{proposition}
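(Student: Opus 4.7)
The inequality $\chi_\ell(u)\le\zeta_\ell(u)$ in \eqref{e:spb201} is immediate: for $B\in\mathrm{FI}(V,u)$ the indicator $f=1_B$ satisfies $\|f\|_1^2=\pi(B)^2\le u\pi(B)=u\|f\|_2^2$, so $1_B$ lies in the test class appearing in the function version of $\zeta_\ell(u)$. For the second inequality in \eqref{e:spb201}, the plan is to run the spectral-profile differential inequality of Goel--Montenegro--Tetali in the infinite-measure setting. Given $\mu\in\ell^2(\mathcal{P}(V),\pi)$ with $u\|\mu\|_{2,\pi}^2\ge 1$, set $f:=\mu/\pi\ge 0$; then $\|f\|_1=1$ and $\|f\|_2^2\ge 1/u$. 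Stationarity of $\pi$ (which holds even when $\pi(V)=\infty$) gives $\|P_tf\|_1=\|f\|_1=1$ for all $t\ge 0$, and reversibility yields $G'(t)=-2\langle(I-P)P_tf,P_tf\rangle_\pi$ where $G(t):=\|P_tf\|_2^2$.

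The key truncation lemma is that for any $g\ge 0$ with $\|g\|_1\le 1$ and $\|g\|_2^2=b$,
\[
\langle (I-P)g,g\rangle_\pi \;\ge\; \tfrac{b}{2}\,\Lambda\!\bigl(c_0/b\bigr)
\]
for an absolute constant $c_0$. To prove this, split $g=\tilde g+(g\wedge c)$ with $\tilde g=(g-c)_+$ and $c$ a suitable multiple of $b$; then Markov's inequality gives $\pi(\{g>c\})\le 1/c$, a direct computation yields $\|\tilde g\|_2^2\ge b/2$, and the $1$-Lipschitz property of $(\cdot-c)_+$ gives $|\tilde g(x)-\tilde g(y)|\le|g(x)-g(y)|$, allowing one to pass from the Dirichlet form of $g$ to that of $\tilde g$, which is bounded below by $\Lambda(\pi(\mathrm{supp}\,\tilde g))\|\tilde g\|_2^2\ge\Lambda(c_0/b)\|\tilde g\|_2^2$ via \eqref{e:countable2}. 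Substituting $g=P_tf$: as long as $G(t)\ge G(0)/2^\ell\ge(u 2^\ell)^{-1}$ we obtain $G'(t)\le -G(t)\,\Lambda(u 2^{\ell+2})$ (after optimizing $c$), and integrating yields $G(t)\le 2^{-\ell}G(0)$ by time $t=\ell\log 2/\Lambda(u 2^{\ell+2})$.

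Turning to \eqref{e:spb2}, the right-hand inequality is just the already-established bound on $\zeta_\ell$ applied at $u2^{-\ell-2}$, so only the left-hand $\chi_\ell(u)\ge c/\Lambda(u)$ is substantive. Using \eqref{e:countable6}, pick $D\in\mathrm{FI}(V,u)$ with $\mathbb{E}_{\pi_D}[T_{D^c}]\ge c_2/(2\Lambda(u))$. Reversibility and the semigroup property give
\[
\frac{\|P_{t}1_D\|_2^2}{\|1_D\|_2^2}\;=\;\frac{\langle 1_D,P_{2t}1_D\rangle_\pi}{\pi(D)}\;=\;\mathbb{P}_{\pi_D}[X_{2t}\in D]\;\ge\;g(2t),\qquad g(s):=\mathbb{P}_{\pi_D}[T_{D^c}>s].
\]
The crux is to convert $\int_0^\infty g(s)\,ds=\mathbb{E}_{\pi_D}[T_{D^c}]\gtrsim 1/\Lambda(u)$ into a pointwise lower bound on $g(s_0)$ at some $s_0\asymp 1/\Lambda(u)$. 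Because $P_D$ is self-adjoint on $\ell^2(D,\pi)$ with every eigenvalue $\beta_i=1-\lambda_i$ satisfying $\lambda_i\ge\lambda(D)\ge\Lambda(u)$, expanding $\mathbf{1}_D=\sum_i a_i\phi_i$ in this eigenbasis gives $g(s)=\pi(D)^{-1}\sum_i a_i^2 e^{-s\lambda_i}$ and hence $\int_{s_0}^\infty g\le g(s_0)/\lambda(D)\le g(s_0)/\Lambda(u)$. Splitting $\mathbb{E}_{\pi_D}[T_{D^c}]\le s_0+g(s_0)/\Lambda(u)$ and taking $s_0:=c_2/(4\Lambda(u))$ forces $g(s_0)\ge c_2/4$. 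Fixing any $\ell$ with $2^{-\ell}<c_2/4$ yields $\|P_{s_0/2}1_D\|_2^2/\|1_D\|_2^2\ge g(s_0)>2^{-\ell}$, so $\chi_\ell(u)>s_0/2=c_2/(8\Lambda(u))$. The main obstacle is precisely this final step — converting the integrated bound on $g$ to a pointwise one — and the spectral expansion of $P_D$ is what makes it work, by collapsing the tail $\int_{s_0}^\infty g$ onto a constant multiple of $g(s_0)$.
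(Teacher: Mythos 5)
Your proposal is correct, and while the upper bound in \eqref{e:spb201} follows essentially the paper's route (you re-derive the truncation lemma that the paper simply cites as \cite[Lemma 3.7]{HH}, then integrate the same differential inequality for $t\mapsto\|P_t f\|_2^2$), your proof of the substantive inequality $\chi_{\ell}(u)\ge c/\Lambda(u)$ in \eqref{e:spb2} is genuinely different. The paper reduces, via the identity $\|\pi_D P_t\|_{2,\pi}^2/\|\pi_D\|_{2,\pi}^2=\mathbb{P}_{\pi_D}[X_{2t}\in D]\ge\mathbb{P}_{\pi_D}[T_{D^c}>2t]$ (which you also use), to a \emph{pointwise-in-time} survival bound $\mathbb{P}_{\pi_D}[T_{D^c}>c/\lambda(B)]\ge c$ for a suitable $D\subseteq B$, and obtains this by reopening the proof of Theorem \ref{thm:2} (the superlevel set $B_L$ of Lemma \ref{lem:good-superlevelset} and the martingale argument deliver exactly such a bound), plus a discrete-to-continuous coupling remark. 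You instead use only the \emph{statement} of Theorem \ref{thm:LambdaPucountable} (equivalently \eqref{e:countable5}--\eqref{e:countable6}) to get an expected exit time $\gtrsim 1/\Lambda(u)$, and convert this integrated bound into a pointwise one by the spectral observation that, for finite reversible irreducible $D$, $g(s)=\mathbb{P}_{\pi_D}[T_{D^c}>s]$ is a nonnegative mixture of exponentials with rates $1-\beta_i\ge\lambda(D)\ge\Lambda(u)$, so $\int_{s_0}^{\infty}g\le g(s_0)/\lambda(D)$; this is correct and works natively in continuous time (the killed semigroup is $e^{s(P_D-I)}$, whose rates are positive even when $P_D$ has negative eigenvalues), so you avoid both the re-entry into the proof of Theorem \ref{thm:2} and the coupling step, at the price of needing reversibility and finiteness of $D$ for the eigen-expansion — which are available here. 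One small repair: as written, ``pick $D$ with $\mathbb{E}_{\pi_D}[T_{D^c}]\ge c_2/(2\Lambda(u))$'' presupposes $0<\Lambda(u)<\infty$ (and the constant in \eqref{e:countable6} is $c$, not $c_2$); in the infinite-measure setting $\Lambda(u)=0$ is possible, and then your choice of $s_0$ degenerates. The fix is to run your argument set-by-set: for each $B\in\mathrm{FI}(V,u)$ take, via \eqref{e:2}, some $D\subseteq B$ with $\mathbb{E}_{\pi_D}[T_{D^c}]\ge c_2/\lambda(B)$, use rates $\ge\lambda(D)\ge\lambda(B)$ and $s_0=c_2/(2\lambda(B))$ to get $g(s_0)\ge c_2/2$, hence $\chi_{\ell}(u)\ge c_2/(4\lambda(B))$, and then take the infimum over $B$ using \eqref{e:countable2}; the case $\Lambda(u)=\infty$ is vacuous. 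With that adjustment your argument is complete.
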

\subsection{Organization of the paper}
In \S\ref{s:bd} we specialize Theorem \ref{thm:1} to the case of birth and death chains. In \S\ref{s:example} we present an illustrative example which was previously discussed. In \S\ref{s:background} we present some further background on quasi-stationary distributions and hitting times. We also prove Theorem \ref{thm:1} (given Theorem \ref{thm:2}) and \eqref{e:spectralprofiletwodefinitionsareequiv}. In \S\ref{s:thm1.2} we prove Theorem \ref{thm:2}. In \S\ref{s:thm3} we prove Theorem \ref{thm:3}. Finally, in \S\ref{s:extensions} we prove Theorem \ref{thm:LambdaPucountable} and Proposition \ref{p:spectral3}.  
\subsection{Acknowledgements}
The author would like to thank Lucas Teyssier for carefully reading an earlier draft of this work and providing  feedback which improved the presentation of the paper.  The author is supported by NSERC grants.

\section{A simple application for birth and death chains}
\label{s:bd}
Consider a birth and death chain on $[n]:=\{1,2,\ldots,n\}$. That is, $P(i,j)>0$ if $|i-j|=1$ and  $P(i,j)>0$ if $|i-j|>1$. Birth and death chains are always reversible. For $\delta \in (0,1)$ let \[x_{\delta}:=\max \{i \in [n] : \pi([i]) \le \delta \} \quad \text{and} \quad y_{\delta}:=\min \{i \in [n] : \pi(\{i,i+1,\ldots,n \}) \le \delta \} .\] We write $a \vee b := \max \{a,b\}$. Let \[t_*:=\max \{\mathbb{E}_{\pi_{[i]}}[T_{i+1}]: i \le x_{3/4} \} \vee \max \{\mathbb{E}_{\pi_{\{i,i+1,\ldots,n \}}}[T_{i-1}]: i \ge y_{3/4} \}  . \]

\begin{theorem}
\label{thm:bd}
In the above setup and notation there exists an absolute constant $C>0$ such that
\[\frac 14 t_{*}\le  \rel \le C t_{*}. \]
\end{theorem}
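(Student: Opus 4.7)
The plan is to establish the two directions of the sandwich separately. The lower bound $\rel \ge t_*/4$ is a Rayleigh quotient argument: for any $D \subsetneq V$ with $\pi(D)\le 3/4$ and any $f \in C_0(D)$, Cauchy--Schwarz gives $(\mathbb{E}_\pi f)^2 \le \pi(D)\|f\|_2^2$, whence $\mathrm{Var}_\pi f \ge \pi(D^c)\|f\|_2^2 \ge \tfrac14 \|f\|_2^2$. Combined with $\langle (I-P)f,f\rangle_\pi \ge \mathrm{Var}_\pi f/\rel$, this yields $\lambda(D) \ge 1/(4\rel)$, and \eqref{e:almost2} gives $\mathbb{E}_{\pi_D}[T_{D^c}] \le 4\rel$. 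Applied to $D=[i]$ for $i \le x_{3/4}$, the BD structure forces $T_{D^c} = T_{i+1}$, and symmetrically for $D = \{i,\ldots,n\}$ with $i \ge y_{3/4}$; taking the max over such $D$ yields $t_* \le 4\rel$.

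For the reverse bound, I would invoke \eqref{e:almost} to pick $B_1 \subsetneq V$ with $\pi(B_1) \le 1/2$ and $\rel \le 1/\lambda(B_1)$. If $B_1$ is not an interval of the BD path, then $P_{B_1}$ is block-diagonal with blocks indexed by the maximal sub-intervals of $B_1$, so $\lambda(B_1) = \min_J \lambda(J)$ over those components; replacing $B_1$ by the minimizer allows us to assume $B_1 = [a,b]$. From $\pi([b]) + \pi(\{a,\ldots,n\}) = 1 + \pi(B_1) \le 3/2$, at least one of these quantities is $\le 3/4$; WLOG $\pi([b]) \le 3/4$, so $b \le x_{3/4}$. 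Monotonicity \eqref{e:lambda(A)CF} applied to $B_1 \subseteq [b]$ gives $\lambda([b]) \le \lambda(B_1)$, hence $\rel \le 1/\lambda([b])$, and Theorem~\ref{thm:2} applied to $B=[b]$ then yields $\rel \le (1/c_2)\max_{D \subseteq [b]}\mathbb{E}_{\pi_D}[T_{D^c}]$.

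The final task is to show the max above is at most $t_*$. For any $D \subseteq [b]$, decomposing $D$ into its maximal sub-intervals $J_1,\ldots,J_k$, the BD structure ensures that from $x \in J_\ell$ the first exit of $D$ coincides with the first exit of $J_\ell$ (the neighbors of $J_\ell$ lie in $D^c$), so $\mathbb{E}_{\pi_D}[T_{D^c}] = \sum_\ell (\pi(J_\ell)/\pi(D))\mathbb{E}_{\pi_{J_\ell}}[T_{J_\ell^c}]$ is a convex combination, and the maximum is attained at an interval $D = [a',b'] \subseteq [b]$. Using $T_{\{a'-1,b'+1\}} \le T_{b'+1}$ gives $\mathbb{E}_{\pi_D}[T_{D^c}] \le \mathbb{E}_{\pi_{[a',b']}}[T_{b'+1}]$. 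The key observation is that for a BD chain $\mathbb{E}_x[T_{b'+1}]$ is non-increasing in $x \in [1,b']$; writing $\pi_{[b']}$ as the convex combination $(\pi([a'-1])/\pi([b']))\pi_{[a'-1]} + (\pi([a',b'])/\pi([b']))\pi_{[a',b']}$ and noting that $\pi_{[a'-1]}$ gives a larger expected hitting time of $b'+1$ than $\pi_{[a',b']}$, one obtains $\mathbb{E}_{\pi_{[a',b']}}[T_{b'+1}] \le \mathbb{E}_{\pi_{[b']}}[T_{b'+1}] \le t_*$, using $b' \le b \le x_{3/4}$. Together these bounds give $\rel \le t_*/c_2$, proving the claim with $C = 1/c_2$.

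The main technical obstacle is handling the maximum over arbitrary $D \subseteq [b]$ produced by Theorem~\ref{thm:2}; the BD structure resolves this on two fronts, first by decomposing any subset into intervals via the fact that exits from $D$ only ``see'' the component containing the current state, and second by providing the monotonicity needed for the mixture comparison $\mathbb{E}_{\pi_{[a',b']}}[T_{b'+1}] \le \mathbb{E}_{\pi_{[b']}}[T_{b'+1}]$.
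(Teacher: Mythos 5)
Your proposal is correct. The lower bound is exactly the paper's argument: your Cauchy--Schwarz step is precisely the proof of \eqref{e:spectralprofiletwodefinitionsareequiv}, which together with \eqref{e:almost2} is how the paper gets $t_*\le 4\rel$. The upper bound is rerouted but lands in the same place. The paper quotes Theorem~\ref{thm:1} and then only needs $t_{\mathrm{H}}^{\pi}\le t_*$, using that $t_{\mathrm{H}}^{\pi}$ is attained on irreducible sets, which for a birth and death chain are intervals of stationary mass at most $1/2$, and then the dichotomy $j\le x_{3/4}$ or $i\ge y_{3/4}$. You instead unpack the ingredients of Theorem~\ref{thm:1}: you take the spectrally heavy set $B_1$ from \eqref{e:almost}, reduce it to an interval via block-diagonality of $P_{B_1}$, enlarge to a one-sided interval $[b]$ (or $\{a,\dots,n\}$) using monotonicity of $\lambda$ and the identity $\pi([b])+\pi(\{a,\dots,n\})=1+\pi(B_1)\le 3/2$, and then apply Theorem~\ref{thm:2} to $B=[b]$, which forces you to control $\max_{D\subseteq[b]}\mathbb{E}_{\pi_D}[T_{D^c}]$ rather than just intervals of mass at most $1/2$; your interval decomposition of $D$ plus the monotone mixture comparison $\mathbb{E}_{\pi_{[a',b']}}[T_{b'+1}]\le\mathbb{E}_{\pi_{[b']}}[T_{b'+1}]$ is the same key comparison as the paper's final step (and you state the monotonicity inequalities in the correct direction, whereas the paper's displayed inequalities at that point have their directions typo'd). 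What the two routes buy: the paper's is shorter as a direct corollary of Theorem~\ref{thm:1}, but leans on the general claim that $t_{\mathrm{H}}^{\pi}$ is attained on irreducible sets; yours avoids that claim at the price of re-entering the machinery behind Theorem~\ref{thm:1} (the set $B_1$ of \eqref{e:almost}, plus BD-specific spectral block decomposition), and yields the same conclusion with $C=1/c_2$.
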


\begin{proof}
We first prove the first inequality. Note that for all $i \le x_{3/4}$ we have that $\pi([i]) \le 3/4$ and $\mathbb{E}_{\pi_{[i]}}[T_{i+1}]=\mathbb{E}_{\pi_{[i]}}[T_{[i]^{c}}]$. Similarly,   for all $i \ge  y_{3/4}$ we have that $\pi(\{i,i+1,\ldots,n \}) \le 3/4$ and that $\mathbb{E}_{\pi_{\{i,i+1,\ldots,n \}}}[T_{i-1}]=\mathbb{E}_{\pi_{\{i,i+1,\ldots,n \}}}[T_{[i-1]}]$. By \eqref{e:almost2} and \eqref{e:spectralprofiletwodefinitionsareequiv} \[t_* \le \frac{1}{\Lambda(3/4)} \le \frac{4}{\widehat{\Lambda}(3/4)} \le\frac{4}{\widehat{\Lambda}(1)}= 4 \rel. \]
We now prove that $  \rel \le C t_{*}$. By Theorem \ref{thm:1} it suffices to show that $t_{\mathrm{H}}^{\pi} \le t_*$. Generally, $ t_{\mathrm{H}}^\pi= \max_{B \,: \pi(B) \le 1/2, \, B \text{ is irreducible}}\mathbb{E}_{\pi_{B}}[T_{B^c}] $. In the current setup, every irreducible set $B$ is an interval. Then there exist some $1 \le i \le j \le n$ such that $B=[[i,j]]:=\{ \ell \in [n]  : i \le \ell \le j  \}$. Assume in addition that  $\pi(B) \le 1/2$. Since $\pi([[y_{3/4}-1,x_{3/4}+1]])>\frac 12$  we either have that $i \ge y_{3/4}$ or that $j \le x_{3/4}$. We argue that
\[\mathbb{E}_{\pi_{[[i,j]]}}[T_{[[i,j]]^c}]\le \mathbb{E}_{\pi_{[j]}}[T_{j+1}] \wedge  \mathbb{E}_{\pi_{\{i,i+1,\ldots,n \}}}[T_{i-1}] . \]
 Using this, we get that if $j \le x_{3/4}$ then
\[\mathbb{E}_{\pi_{[[i,j]]}}[T_{[[i,j]]^c}]\le \mathbb{E}_{\pi_{[j]}}[T_{j+1}] \le t_*, \]
whereas if  $i \ge y_{3/4}$ then
\[\mathbb{E}_{\pi_{[[i,j]]}}[T_{[[i,j]]^c}]\le \mathbb{E}_{\pi_{\{i,i+1,\ldots,n \}}}[T_{i-1}] \le t_*, \]
as desired. 

It remains to verify that $\mathbb{E}_{\pi_{[[i,j]]}}[T_{[[i,j]]^c}]\le \mathbb{E}_{\pi_{[j]}}[T_{j+1}] \wedge  \mathbb{E}_{\pi_{\{i,i+1,\ldots,n \}}}[T_{i-1}]$. We will only prove that $\mathbb{E}_{\pi_{[[i,j]]}}[T_{[[i,j]]^c}]\le \mathbb{E}_{\pi_{[j]}}[T_{j+1}] $, as the inequality  $\mathbb{E}_{\pi_{[[i,j]]}}[T_{[[i,j]]^c}]\le\mathbb{E}_{\pi_{\{i,i+1,\ldots,n \}}}[T_{i-1}] $ can be proved in an analogous manner. Clearly, for $\ell \in [[i,j]]$ we have that $\mathbb{E}_{\ell}[T_{[[i,j]]^c}] \le \mathbb{E}_{\ell}[T_{j+1}] $. Hence $\mathbb{E}_{\pi_{[[i,j]]}}[T_{[[i,j]]^c}]\le \mathbb{E}_{\pi_{[[i,j]]}}[T_{j+1}] $. For $\ell<i$, by the strong Markov property, together with the fact that if $X_0=\ell<i$ then $T_i<T_{j+1}$, we have that
\[\mathbb{E}_{\ell}[T_{j+1}] \le \mathbb{E}_{i}[T_{j+1}] \le \mathbb{E}_{\pi_{[[i,j]]}}[T_{j+1}].   \] 
It follows that $\mathbb{E}_{\pi_{[[i,j]]}}[T_{j+1}] \le \mathbb{E}_{\pi_{[j]}}[T_{j+1}]$, as claimed.
\end{proof}

\section{An illustrative example}
\label{s:example}
Consider a  $n$-vertex graph $G=(V,E)$ which has a vertex $o$ which has a loop and three other edges incident to it, while all other vertices are of degree 3. Denote the relaxation time of the simple random walk on $G$  by $\rel(G)$. We pick $G$ such that $\rel(G) \lesssim 1$.

 We now modify $G$ as follows. We delete the loop at $o$ and attach to  $o $ a path of length $r=\lceil n^{1/3} \rceil$. We denote the vertices belonging to the path by  $F$. We employ the convention that $o \notin F$. Denote the resulting graph by $\widehat G$. Consider simple random walk  $\mathbf{X}:=(X_k)_{k=0}^{\infty}$ on  $\widehat G$. We denote the corresponding probability distribution and expectation by $\mathbb{P}$ and $\mathbb{E}$. We denote the stationary distribution of $\mathbf{X}$ by $\pi$.

Let $\delta \in (0,1/2)$.  Let $A \subset V $ be a set satisfying $\delta n \le |A| \le (1-\delta) n$. In particular, $A \cap F = \eset$, i.e., $A$ does not contain a single vertex from the path attached at $o$.  Denote $B:=(V \cup F) \setminus A$ (i.e., $B$ is the complement of $A$ w.r.t.\ the graph $\widehat G$). Our goal is to show that there exist  constants $C(\delta)$ and $N_0(\delta)$ (depending only on $\delta$) and an absolute constant $C_1$ such that if $n \ge N_0(\delta$) then for all such $A$ we have that
\[\mathbb{E}_{\pi_{B}}[T_{A^c}] \le C(\delta) \quad \text{and} \quad \lambda(B)  \le \lambda(F) \le C_1n^{-2/3}  . \]
Moreover, the inequality $\mathbb{E}_{\pi_{B}}[T_{A^c}] \le C(\delta)$ does not require that $A \cap F = \eset$ (below we only analyze the case that $A \cap F = \eset$, but the case that $A \cap F \neq \eset$ poses no new obstacles, and the analysis below can be modified to treat it). 
The inequality $\lambda(B)  \le \lambda(F) $ follows from the fact that $F \subset B$ using the second line of \eqref{e:lambda(A)CF} (see also \eqref{e:monotonicity} below for a different argument). 

  We label the vertices of $F$  by the set $[r]:=\{1,2,\ldots,r\}$, such that $1$ is a neighbor of $o$ and such that $i$ and $i+1$ are adjacent for all $1 \le i < r$. Consider the distribution $\alpha^F$ on $F$ given by  $\alpha^{F}(i)=\frac{\sin(\frac{\pi i}{2r})}{\sum_{j=1}^{r}\sin(\frac{\pi j}{2r})}$ for all $i \in [r]$. It is easy to check that $\alpha^{F} P_F=\cos(\frac{\pi }{r}) \alpha^{F}$.  It follows from the Perron--Frobenius theorem together with the fact that   $\alpha^{F}(i)>0$ for all $i \in F$ and the fact that $P_F$ is irreducible,   that $\alpha^{F}$ is the quasi-stationary distribution of $F$. Hence $\lambda(F) = 1-\cos(\frac{\pi }{r})$.  Since  $1-\cos(\frac{\pi }{r}) = \Theta(r^{-2})$ we get that \[\lambda(B)  \le \lambda(F) \le C_1r^{-2} \le C_1n^{-2/3},   \]
 as desired.
We now show that $\mathbb{E}_{\pi_{B}}[T_{A^c}] \le C(\delta)$. 

We write $T_o$ for $T_{\{o\}}$. Let $D:=B\setminus F=V \setminus A$. Note that $D$ is the complement of $A$ in the  graph $G$. We have that
\begin{equation}
\label{e:3cases}
\mathbb{E}_{\pi_{B}}[T_{A}] \le \frac{\pi(F)}{\pi(B)}  \mathbb{E}_{\pi_{F}}[T_{A}]+  \mathbb{E}_{\pi_{D}}[T_{A} \mathbf{1}\{ T_A<T_o\}]+\mathbb{E}_{\pi_{D}}[T_{A} \mid T_A>T_o]\mathbb{P}_{\pi_{D}}[T_A>T_o].
\end{equation}
We now decompose the first and third terms on the right-hand side as follows,
\begin{equation}
\label{e:example0}
\mathbb{E}_{\pi_{F}}[T_{A}]=\mathbb{E}_{\pi_{F}}[T_{o}]+\mathbb{E}_{o}[T_{A}].
\end{equation}
\begin{equation}
\label{e:example00}
\mathbb{E}_{\pi_{D}}[T_{A} \mid T_A>T_o]=\mathbb{E}_{\pi_{D}}[T_{o} \mid T_A>T_o]+\mathbb{E}_{o}[T_{A}].
\end{equation}
By the above analysis, together with \eqref{e:almost2}, we have that 
\begin{equation}
\label{e:example000}
\mathbb{E}_{\pi_{F}}[T_{o}]=\mathbb{E}_{\pi_{F}}[T_{F^c}] \le \mathbb{E}_{\alpha^{F}}[T_{F^c}]=1/\lambda(F)  \asymp r^2.
 \end{equation}

Let $\mathbf{Y}:=(Y_k)_{k=0}^{\infty}$ be a simple random walk on $G$ (i.e., on the original graph, before attaching the path at $o$ and deleting the loop at $o$). We denote the corresponding probability distribution and expectation by $\mathbf{P}$ and $\mathbf{E}$. We denote the stationary distribution of $\mathbf{Y}$ by $\pi^G$. By construction we have  for all $v \in V$ that $v$ has the same degree in $G$ and in $\widehat G$ (this is because $o$ has one additional neighbor in $\widehat G$ but $o$ has a loop in $G$ and not in $\widehat G$). Hence $\pi_V=\pi^G$ and for all $W \subset V$ we have that $\pi_W=(\pi^G)_W$. 

 We will show that if $n \ge N_0(\delta)$ then
\begin{equation}
\label{e:example1}
\mathbb{E}_{o}[T_{A}] \lesssim_{\delta} r=\lceil n^{1/3} \rceil,
\end{equation}
\begin{equation}
\label{e:example2}
\mathbb{E}_{\pi_{D}}[T_{A} \mathbf{1}\{ T_A<T_o\}] = \mathbf{E}_{\pi_{D}}[T_{A} \mathbf{1}\{ T_A<T_o\}]    \le \mathbf{E}_{\pi_{D}}[T_{A} ]   \lesssim_{\delta} 1,
\end{equation}
\begin{equation}
\label{e:example3}
\mathbb{P}_{\pi_{D}}[T_A>T_o]=\mathbf{P}_{\pi_{D}}[T_A>T_o] \lesssim_{\delta}  \frac{\log n}{n},
\end{equation}
\begin{equation}
\label{e:example4}
\mathbb{E}_{\pi_{D}}[T_{o} \mid T_A>T_o] = \mathbf{E}_{\pi_{D}}[T_{o} \mid T_A>T_o] \lesssim_{\delta} \log n . 
\end{equation}
Combining \eqref{e:example0},   \eqref{e:example000} and \eqref{e:example1}  yields that if $n \ge N_0(\delta)$ then
\begin{equation*}
\mathbb{E}_{\pi_{F}}[T_{A}] \lesssim r^2 \asymp n^{2/3} \quad \text{and hence that} \quad \frac{\pi(F)}{\pi(B)}  \mathbb{E}_{\pi_{F}}[T_{A}]\lesssim_{\delta} 1.
\end{equation*}
Combining \eqref{e:example00}, \eqref{e:example1}, \eqref{e:example3} and  \eqref{e:example4} gives that
\begin{equation*}
\mathbb{E}_{\pi_{D}}[T_{A} \mid T_A>T_o]\mathbb{P}_{\pi_{D}}[T_A>T_o] =o(1).
\end{equation*}
Plugging in the last two displays and \eqref{e:example2} in \eqref{e:3cases} yields that  $\mathbb{E}_{\pi_{B}}[T_{A}]  \lesssim_{\delta} 1$ as desired.

To conclude the analysis of the example we now prove \eqref{e:example1}-\eqref{e:example4}. We first prove \eqref{e:example2}. The equality in \eqref{e:example2} follows from the observation that if $X_0 \in V$ (in particular, if $X_0 \sim \pi_D$) then $\{X_t:0 \le t \le T_o \} \subseteq V$ (i.e., $\{X_t:0 \le t \le T_o \} \cap F= \eset$). The first inequality in \eqref{e:example2} is trivial. The second inequality in \eqref{e:example2} follows from the fact that if $\alpha^{D}$ is the quasi-stationary distribution of the set $D$ for  $\mathbf{Y}$ (i.e.\ for simple random walk on $G$) then by  \eqref{e:almost2} and \eqref{e:spectralprofiletwodefinitionsareequiv} we have that
\[\mathbf{E}_{\pi_{D}}[T_{A} ] =\mathbf{E}_{(\pi^G)_{D}}[T_{A} ] \le \mathbf{E}_{\alpha^{D}}[T_{A}   ]  \le \rel(G)/\pi^{G}(A) \lesssim_{\delta}1.     \]
This concludes the proof of \eqref{e:example2}. We now prove \eqref{e:example3}.   The  equality in  \eqref{e:example3} follows in exactly the same manner as the equality in \eqref{e:example2}. We now show that $\mathbf{P}_{\pi_{D}}[T_A>T_o] \lesssim_{\delta}  \frac{\log n}{n}$.
We write $N(a,b):=\mathbf{E}_a[|\{k \in [0,T_A) \cap \mathbb{Z}:Y_k=b \}|] $ for the expected number of visits to $b$ made by $\mathbf{Y}$ before time $T_A$, when $Y_{0}=a$. Similarly, let $N_o:=\mathbf{E}_{\pi_D}[|\{k \in [0,T_A) \cap \mathbb{Z}:Y_k=o \}|]$. By reversibility and the fact that $(\pi^G)_D=\pi_D$ we get that $\pi_D(x)N(x,o)=\pi_D(o)N(o,x)$ for all $x\in D$. Hence
\[\mathbf{P}_{\pi_{D}}[T_A>T_o] \le N_o= \sum_{x \in D}\pi_D(x)N(x,o)= \sum_{x \in D}\pi_D(o)N(o,x) \lesssim_{\delta}  \frac{1}{n}\mathbf{E}_{o}[T_A] .   \]
Let $\mix(G,\eps)$ be the $\eps$ total variation mixing time of the rate-1 continuous time version of  $\mathbf{Y}$. It follows from  \cite{Aldoussome} (as well as from \cite{Oliveira} and\cite{PS}; see the discussion in page 2) that $\mathbf{E}_{o}[T_A] \lesssim_{\delta} \mix(G,1/4)$. It is standard (e.g.\ \cite[Theorem 20.6]{levin}) that $\mix(G,1/4) \lesssim \rel(G)  \log n$, and so  \[\mathbf{E}_{o}[T_A] \lesssim_{\delta} \mix(G,1/4) \lesssim \rel(G)  \log n \lesssim \log n.\] This concludes the proof of \eqref{e:example3}. We now prove \eqref{e:example4}. 

The  equality in  \eqref{e:example4} follows in exactly the same manner as the equality in \eqref{e:example2}. The inequality in  \eqref{e:example4} follows from the fact that $\mathbf{P}_{\pi_{D}}[T_A>T_o] \ge \pi_D(o) \ge \frac{1}{n}$, in conjunction with
\[\mathbf{P}_{\pi_{D}}[T_o =\ell,T_A  >\ell] \le \mathbf{P}_{\pi_{D}}[T_A > \ell] \le \mathbf{P}_{\alpha^{D}}[T_A > \ell] \le \exp \left(-\ell \pi^G(A)/\rel(G)  \right),     \]
where in the penultimate inequality we used \eqref{e:almost2} and in the last inequality we used \eqref{e:spectralprofiletwodefinitionsareequiv} together with the fact that $1-x \le e^{-x}$ for all $x \in \R$ (as well as that $\mathbf{P}_{\alpha^{D}}[T_A > \ell] =\left(1-\lambda_G(D) \right)^{\ell} \le \exp(- \ell \lambda_G(D) )$, where the subscript $G$ indicates that this is taken w.r.t.\ $\mathbf{Y}$). Indeed, since $ \pi^G(A)/\rel(G) \gtrsim \delta$, there exists an absolute constant $C'>0 $ such that
\[\sum_{\ell:\ell \ge C'\delta^{-1}\log n}\ell \exp \left(-\ell \pi^G(A)/\rel(G)  \right)\le 1/n^{2} \le \frac{\mathbf{P}_{\pi_{D}}[T_A>T_o]}{n} .  \]    

It remains to prove \eqref{e:example1}. We use a coupling argument. One way of sampling $(X_t)_{t=0}^{T_A}$ with $X_0=o$ is to first sample a copy of  $(Y_t)_{t=0}^{T_A}$ with $Y_0=o$, and then replace for all $i$ the $i$th time   $(Y_t)_{t=0}^{T_A}$ crosses the loop attached to $o$ in $G$  by an excursions from $o$ to itself that goes into the path $F$. The expected length of each such excursion is $r+1$. Since (as previously established) $\mathbf{E}_o[T_A] \lesssim_{\delta} \log n$, using Wald's identity, in order  to conclude the proof of \eqref{e:example1} it suffices to show that if $n \ge N_0(\delta)$ then uniformly over all $A$ such that $\delta n\le |A| \le (1-\delta)n$, we have that $\mathbf{P}_o[T_A<T_{\text{loop}}] \gtrsim 1$, where $T_{\text{loop}}=\inf \{k \in \mathbb{N}  :Y_{k}=o=Y_{k-1} \}$.

Let $T_{o}^+:=\inf \{k>0:Y_k=o \}$. Finally, we argue that  
\[\mathbf{P}_o[T_A<T_{\text{loop}}] \ge  \min_{v \in V}\mathbf{P}_o[T_v<T_{o}^+] \gtrsim 1/\rel(G) \gtrsim 1 .\]  The first and third inequalities in the last display are trivial. We now prove the middle one. Let $v \in V$. Let $K$ be the transition matrix of the simple random walk on $G$. Let $K_t:=e^{t(K-I)}$. By \cite[Corollary 2.8]{aldous} and \cite[Lemma 10.2]{levin},
\[\frac{1}{\min_{v \in V}\mathbf{P}_o[T_v<T_{o}^+] \pi^G(o)}=\mathbf{E}_o[T_v]+\mathbf{E}_v[T_o] \le 4 \max_{x \in V}\mathbf{E}_{\pi^G}[T_x] . \] 
By Lemma 2.11  in\footnote{See  \S2.2.3  in \cite{aldous} for an explanation of  the extension of the identity  $\pi^G(x)\mathbf{E}_{\pi^G}[T_x]=\sum_{\ell=0}^{\infty}\left(K^{ \ell}(x,x) -\pi^G(x \right)$  to its continuous-time counterpart $\pi^G(x)\mathbf{E}_{\pi^G}[T_x]=\int_0^{\infty}\left(K_t(x,x) -\pi^G(x)\right)\mathrm{d}t$). Alternatively, using the spectral decomposition it is easy to verify that the sum equals the integral.} \cite{aldous} 
\[ \max_{x \in V}\mathbf{E}_{\pi^G}[T_x] = \max_{x \in V}  \int_0^{\infty}\left(\frac{K_t(x,x) -\pi^G(x)}{\pi^G(x)}\right)\mathrm{d}t \le \frac{4n}{3}\int_0^{\infty}\exp \left(-\frac{t}{\rel(G)}\right)\mathrm{d}t  \lesssim n\rel(G),  \]
where the penultimate inequality follows from the fact that $\min_{x} \pi(x) \ge \frac{3}{4 n}$ together with the standard fact (often referred to as the Poincar\'e inequality) that $K_t(x,x) -\pi^G(x) \le (1-\pi^G(x) ) \exp \left(-\frac{t}{\rel(G)}\right) \le \exp \left(-\frac{t}{\rel(G)}\right)  $ for all $t \ge 0$ and all $x \in V$, which can easily be derived from the spectral decomposition (cf.\ \cite[Lemma 12.2]{levin}).  Combining the last two displays shows that $  \min_{v \in V}\mathbf{P}_o[T_v<T_{o}^+] \gtrsim 1/\rel(G)$, as desired. \qed

\section{Preliminaries}
\label{s:background}
Let $P$ be an irreducible reversible transition matrix on a finite state space $V$. Denote its stationary distribution by $\pi$. For a distribution $\mu$ and a function $f:V \to \mathbb{R} $ we write $\mathbb{E}_{\mu}[f]:=\sum_{u \in V}\mu(u)f(u)$ for its expectation w.r.t.\ $\mu$. Fix a set $\eset \neq B \subsetneq V$. Recall that we denote $\pi$ conditioned on $B$ by $\pi_B$. For $f,g:V \to \mathbb{R}$ we define the  inner product $\langle f,g \rangle_{\pi_{B}}:=\mathbb{E}_{\pi_{B}}[fg]$ and the corresponding $L^2$ norm  $\|f\|_{2,B}:=\sqrt{\langle f,f \rangle_{\pi_{B}}}$.  

 Recall that  $P_B(a,b)=P(a,b)\mathbf{1}\{a.b \in B\}$ and $C_0(B)=\{f \in \mathbb{R}^V:f(x)=0 \text{ for all }a \in B^c \}$. By  reversibility of $P$ we have that $P_B$ satisfies $\pi_B(a)P_B(a,b)=\pi_B(b)P_B(b,a)$ for all $a,b \in B$ and is thus self-adjoint w.r.t.\ the inner product $\langle \cdot ,\cdot \rangle_{\pi_{B}}$ on the space \[ C_0(B):=\{f \in \mathbb{R}^V:f(x)=0 \text{ for all }a \in B^c \}.\]
Hence $P_{B}$ has $k:=|B|$ real eigenvalues $ 1>\beta (B)=\gamma_1 \ge \gamma_2 \ge \cdots
\ge \gamma_{k} \ge - |\gamma_1|$ corresponding to eigenfunctions in $C_0(B)$, where the last inequality follows by the Perron--Frobenius theorem, and the first since $B$ is a proper subset of $V$. For the sake of ease of notation we write $\beta$ instead of $\beta(B)$ and  $\la:=1-\beta $ instead of $\lambda(B):=1-\beta (B)$. 

We now present some background on quasi-stationary distributions and their relation to hitting times. We begin the analysis by recalling (e.g.\ \cite[Ch.\ 4]{aldous}) that the quasi-stationary distribution $\alpha=\alpha^B$ corresponding to the set $B$ satisfies that the first hitting time of $B^c$ under $\mathbb{P}_{\alpha}$ has  a Geometric distribution of parameter $\lambda=1-\beta$.  

We now prove the above claim. It is an immediate consequence of the Perron--Frobenius theorem that there exists a distribution $\alpha=\alpha^B$ such that $\alpha P_B=\beta\alpha$ and so for this $\alpha$, \[\mathbb{P}_{\alpha}[T_{B^{c}}>m]=\alpha (P_B)^m(B)=\beta^{m},\] for all $m \in \mathbb{Z}_+$, where we used the fact that for all $x$,  \[\alpha (P_B)^m(x):=\sum_{b}\alpha (b)(P_B)^m(b,x)=\mathbb{P}_{\alpha}[X_{m}=x,T_{B^{c}}>t].\] It
follows that starting from initial distribution $\alpha$ the law of $T_{B^{c}}$ is Geometric  of parameter $\lambda$. Hence  $\mathbb{E}_{\alpha}[T_{B^{c}}]=1/\lambda$.

 The identity $\alpha (P_B)^m=\beta^{m} \alpha$ also implies that for all $x \in B$ and all $m\ge 0$
\[ \mathbb{P}_{\alpha}[X_{m}=x \mid T_{B^{c}}>m]=\beta^{-m} \mathbb{P}_{\alpha}[X_{m}=x , T_{B^{c}}>m]=\alpha(x). \] 
The last equation justifies the name ``quasi-stationary distribution".

We now derive the distribution of the hitting time of  $B^{c}$ starting from initial distribution $\pi_{B}$. When all of the eigenvalues of $P_B$ are non-negative, as is the case when $P(a,a) \ge 1/2$ for all $a$, this distribution is a mixture of Geometric distributions, whose parameters are the eigenvalues of $P_B$. However, we shall not assume that the eigenvalues of $P_B$ are all non-negative.   

Observe that for all $f \in C_0(B) $ and all $m \in \mathbb{N}$,  \[((P_B)^mf)(a):=\sum_b (P_B)^m(a,b)f(b)=\mathbb{E}_a[f(X_{m}) \mathbf{1}\{T_{B^{c}}>m \}].\]   
Let $f_1,\ldots,f_k$ be an orthonormal (w.r.t.\ the inner product $\langle \cdot , \cdot \rangle_{\pi_{B}}$)  basis of  $C_0(B)$, such that $P_B f_i=\gamma_i f_i$ for all $i$. By reversibility we can take  $f_1=\frac{\alpha/\pi_{B}}{\|\alpha/\pi_{B}\|_{2,B}}$.
Then $1_A=\sum_{i=1}^k c_{i}f_i $, where $c_i:=\mathbb{E}_{\pi_{B}}[f_i]$ and hence $(P_B)^m1_B=\sum_{i=1}^k c_{i}\gamma_i^m f_i $. Thus, by orthonormality,
\begin{equation}
\label{e:mixture}
\Pr_{\pi_{B}}[T_{B^{c}}>m]=\langle (P_B)^m1_B,1_B \rangle_{\pi_{B}}=\sum_{i=1}^{k} c_{i}^{2}\gamma_i^m = \frac{1}{\|\alpha/\pi_{B}\|_{2,B}^2}(1-\lambda)^m+\sum_{i=2}^k c_{i}^{2}\gamma_i^m .
\end{equation}
Taking $m=0$ we see that $\sum_{i=1}^kc_i^2=1$. In particular, since $\gamma_i \le \gamma_1$ for all $i$ we get that
\begin{equation}
\label{e:mixture2}
\frac{1}{\lambda \|\alpha/\pi_{B}\|_{2,B}^2} \le \mathbb{E}_{\pi_{B}}[T_{B^{c}}]\le \frac{1}{\lambda}.
\end{equation}
This implies the second inequality $\max_{D \,: \, D \subset B}\mathbb{E}_{\pi_{D}}[T_{D^c}] \le \frac{1}{\lambda(B)}$  from \eqref{e:2}, since 
\begin{equation}
\label{e:monotonicity}
\text{if } D \subset B \quad \text{then} \quad \lambda(D) \le \lambda(B),
\end{equation}
which follows from the second line of \eqref{e:lambda(A)CF}, or alternatively from the identity
\begin{equation}
\label{e:lambdaaslimit}
\beta(B)=\lim_{k\to \infty}\max_{x \in B} -\frac 1k \log \mathbb{P}_x[T_{B^c}>k] .
\end{equation}

\subsection{Proof of Theorem \ref{thm:1}, given Theorem \ref{thm:2}}
\label{s:thm1}
\begin{proof}
By \eqref{e:spectralprofiletwodefinitionsareequiv}  $(1-\delta)^{-1} \Lambda(\delta) \ge \widehat \Lambda(1)=\frac{1}{\rel} $.  Hence by \eqref{e:mixture2}  for all $\eset \neq A \subsetneq V$ we have that
\[\mathbb{E}_{\pi_{A^c}}[T_A] \le 1/\lambda(A^c) \le 1/\Lambda(\pi(A^c))  \le  \rel/\pi(A) . \]
This immediately gives the second inequality
$\max_{A \subsetneq V \, :\, \pi(A) \ge 1/2}\mathbb{E}_{\pi_{A^c}}[T_A] \le 2 \rel$ in \eqref{e:1}. 

We now prove the first inequality in \eqref{e:1}. Let $\beta=\beta_2=1-\frac{1}{\rel}$ be the second largest eigenvalue of $P$. Consider $f_2 \in \mathbb{R}^V $ such that $Pf_2=\beta f_2$ and $\|f\|_2=1$. Without loss of generality, $\pi(\{x:f_2(x)>0 \}) \le 1/2$, since otherwise we can consider $-f_2$ instead of $f_2$. Let $z$ be such that $f_2(z)=\max_{x}f_2(x)$. Let $A=\{x:f_2(x) \le 0 \}$. Then $Z_k=\beta^{-k \wedge T_{A}}f_2(X_{k\wedge T_{A}})$ is a martingale w.r.t.\ the natural filtration of the chain, where $a \wedge b:=\min(a,b)$. Since $f_2(x) \le 0$ for all $x \in A$ we have that $f_2(X_{T_{A}}) \le 0$, and so \[Z_k \le \beta^{-k }f_2(X_{k})\mathbf{1}\{ T_{A}>k\} \le \beta^{-k}f(z)\mathbf{1}\{ T_{A}>k\} .\]   Hence
\[f(z)=\mathbb{E}_z[Z_0] =\mathbb{E}_z[Z_k] \le   \beta^{-k  } \mathbb{P}_z[ T_{A}>k]f(z).  \]
Since $\mathbb{E}_{\pi}[f]=0$ and $\|f\|_2=1$ we must have that $f(z)$ is strictly positive. Rearranging the last display yields that $\liminf_{k\to \infty}-\frac 1k \log \mathbb{P}_z[T_{A^c}>k] \ge \beta=  1-\frac{1}{\rel}$. It follows from \eqref{e:lambdaaslimit} that $\lambda(A^c) \le \frac{1}{\rel} $. This concludes the proof, using the first inequality in \eqref{e:2}. \end{proof}
\subsection{The spectral profile - Proof of \eqref{e:spectralprofiletwodefinitionsareequiv}}
\label{s:1.9}
We now explain why \eqref{e:spectralprofiletwodefinitionsareequiv} holds. By the Courant--Fischer characterization of the eigenvalues of $P_B$, \begin{equation}
\label{e:lambda(A)CF}
\begin{split}
\lambda(B)&=\min\{\langle (I-P)f,f \rangle_{\pi}: \mathbb{E}_{\pi}[f^2] =1,f \in C_0(B)\}
\\ & = \min\{\langle (I-P)f,f \rangle_{\pi}: \mathbb{E}_{\pi}[f^2] =1,f \in C_0^{+}(B)\},
\end{split}
 \end{equation}
where the second equality is a consequence of the Perron--Frobenius theorem. Comparing the last display with \eqref{e:widehatlambdaBdef} we get that taking $f$ attaining the minimum in \eqref{e:lambda(A)CF} yields that  $\frac{\widehat \lambda(B)}{\lambda(B)} \ge \frac{\mathrm{Var}_{\pi}f}{\mathbb{E}_{\pi}[f^{2}]}$. A simple application of the Cauchy--Schwarz inequality yields that for all $f \in C_0(B)$ we have that
  $(\mathbb{E}_{\pi}[f])^{2}=(\mathbb{E}_{\pi}[f1_{B}])^{2} \le \pi(B)\mathbb{E}_{\pi}[f^{2}]$. Hence $\frac{\widehat \lambda(B)}{\lambda(B)} \ge \frac{\mathrm{Var}_{\pi}f}{\mathbb{E}_{\pi}[f^{2}]} \ge \pi(B^c)$, as desired.

\section{Proof of Theorem \ref{thm:2}}
\label{s:thm1.2}
Let $A$ be as above and $B=A^c$. For the remainder of this section we employ the following notation. Let $\alpha$ be the quasi-stationary distribution on $B$. Let 
 $f:=\alpha/\pi_{B}$. For $r \ge 0$ and for an interval $J \subset \mathbb{R}_+$ we write    \[B_r:=\{b\in B : f(b) \ge  r \}  \quad \text{and} \quad B_J:=\{b\in B : f(b) \in J \}.   \] 
We also write \[\pi_r:=\pi_{B_r} \quad  \text{and}   \quad \alpha_r \] for $\pi$ and $\alpha$ (respectively) conditioned on $B_r$ (i.e.\ $\pi_r:=\pi_{B_{r}}(x) $ and $\alpha_{r}(x):=\frac{\alpha(x) \mathbf{1}\{  x\in B_r \}}{\alpha(B_{r})}$). We also write for an interval $J \subset \mathbb{R}_+$, $\pi_J:=\pi_{B_J}$ and $\alpha_J$ for $\pi$ and $\alpha$ conditioned on $B_J$. 

We will prove the  first inequality from  \eqref{e:2} by considering a set of the form $ B_{L}$  for a careful choice of $L$.  The correct choice is subtle!

The constant $\frac{20}{17}$ in \eqref{e:lem3.1} is not optimal. For our purposes all that matters is that $L^{-1}\mathbb{E}_{\pi_{L}}[f]>  \frac{20}{17}$  is larger than one and that $\mathbb{E}_{\pi_{L}}[f^{2}] < 2(\mathbb{E}_{\pi_{L}}[f])^{2} $.

\begin{lemma}
\label{lem:good-superlevelset} For every  irreducible, reversible Markov chain on a finite state $V$ with stationary distribution $\pi$ and   every $\eset \neq B \subsetneq V$, there exists some $L>0  $ such that 
\begin{equation}
\label{e:lem3.1}
 \mathbb{E}_{\pi_{L}}[f]  = \frac{ \alpha(B_{L})}{\pi_{B}(B_{L})} >  \frac{20}{17 }L , \quad \text{and}
\end{equation}
\begin{align}
\label{e:lem3.12}
\mathbb{E}_{\pi_{L}}[f^{2}] = 2 L  \mathbb{E}_{\pi_{L}}[f] \le 4L^2  .
\end{align}
\end{lemma}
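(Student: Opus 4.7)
The plan is to encode both displayed conditions through the single scalar function
\[
H(L):=\sum_{b\in B_L}\alpha(b)\bigl(f(b)-2L\bigr),
\]
and to pick $L$ as an appropriate zero of $H$. Using $\alpha(b)=\pi_B(b)f(b)$, one has $H(L)=\pi_B(B_L)\bigl(\mathbb{E}_{\pi_L}[f^{2}]-2L\,\mathbb{E}_{\pi_L}[f]\bigr)$, so for non-empty $B_L$ the equality in \eqref{e:lem3.12} is exactly $H(L)=0$. At any such $L$, Cauchy--Schwarz yields $(\mathbb{E}_{\pi_L}[f])^{2}\le\mathbb{E}_{\pi_L}[f^{2}]=2L\,\mathbb{E}_{\pi_L}[f]$, hence $\mathbb{E}_{\pi_L}[f]\le 2L$ and $\mathbb{E}_{\pi_L}[f^{2}]\le 4L^{2}$, which gives the right inequality of \eqref{e:lem3.12} for free.

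Next I would record the shape of $L\mapsto H(L)$. Let $v_1>\cdots>v_k>0$ be the distinct values of $f$ on $B$ and set $A_i:=\{f=v_i\}$. On each interval $(v_{i+1},v_i]$ the set $B_L$ is constant, so $H$ is affine with slope $-2\alpha(B_L)<0$; at each $L=v_i$, $H$ has an \emph{upward} right-jump of size $v_i\alpha(A_i)\ge 0$, since the summand $\alpha(A_i)(v_i-2L)$ evaluates to $-v_i\alpha(A_i)$ at $L=v_i$ and leaves the sum for $L>v_i$. Jensen (or Cauchy--Schwarz) applied to $\mathbb{E}_{\pi_B}[f]=1$ gives $H(0)=\mathbb{E}_{\alpha}[f]=\|\alpha/\pi_B\|_{2,B}^{2}\ge 1>0$, while $H(v_1)=-v_1\alpha(A_1)<0$, so $H$ changes sign.

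I would then take $L^{\ast}:=\sup\{L>0:H(L)>0\}$. Because the only discontinuities of $H$ are upward jumps, $L^{\ast}$ cannot coincide with any $v_i$: otherwise the jump would produce $H(v_i^{+})>0$, pushing the supremum further to the right. Hence $L^{\ast}$ lies in the interior of some affine piece and $H(L^{\ast})=0$ by continuity, which establishes the middle equality in \eqref{e:lem3.12}. Maximality of $L^{\ast}$ additionally imposes $H(v_j^{+})\le 0$ for every $v_j>L^{\ast}$, equivalently $v_j\,\alpha(A_j)\le 2\alpha(\{f\ge v_j\})\,(v_j-L^{\ast})$.

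The main obstacle is promoting the trivial bound $\mathbb{E}_{\pi_{L^{\ast}}}[f]\le 2L^{\ast}$ to the strict inequality \eqref{e:lem3.1}. Using the identity
\[
\frac{\mathbb{E}_{\pi_{L^{\ast}}}[f^{2}]}{\bigl(\mathbb{E}_{\pi_{L^{\ast}}}[f]\bigr)^{2}}=\mathbb{E}_{\alpha_{L^{\ast}}}[f]\cdot\mathbb{E}_{\alpha_{L^{\ast}}}[1/f]=2L^{\ast}\,\mathbb{E}_{\alpha_{L^{\ast}}}[1/f],
\]
the desired bound $\mathbb{E}_{\pi_{L^{\ast}}}[f]/L^{\ast}>20/17$ rewrites as $\mathbb{E}_{\alpha_{L^{\ast}}}[1/f]<17/(20L^{\ast})$. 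The plan is to feed the chain of jump inequalities from the previous paragraph into this harmonic mean: they limit how much $\alpha$-mass on $B_{L^{\ast}}$ can sit at values much larger than $L^{\ast}$ while still satisfying $\mathbb{E}_{\alpha_{L^{\ast}}}[f]=2L^{\ast}$, and careful bookkeeping extracts the explicit constant $20/17$. This quantitative step, turning the individual constraints $H(v_j^{+})\le 0$ into the required strict harmonic-mean bound, is where the real work lies.
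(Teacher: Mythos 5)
Your construction of the threshold is correct and is essentially the paper's own, in lightly disguised form: your $H(L)=\pi_B(B_L)\bigl(\mathbb{E}_{\pi_L}[f^2]-2L\,\mathbb{E}_{\pi_L}[f]\bigr)$ is a rescaling of the quantity the paper works with (namely $U_\ell=\mathbb{E}_{\pi_\ell}[f^{2}]/(\ell\,\mathbb{E}_{\pi_\ell}[f])=\frac{1}{\ell}\mathbb{E}_{\alpha_\ell}[f]$, since $H(L)=\pi_B(B_L)\,\mathbb{E}_{\pi_L}[f]\,L\,(U_L-2)$), and your ``affine decreasing on pieces, upward jumps at the values of $f$'' argument for locating a zero of $H$ with $H\le 0$ to its right is the same mechanism the paper uses to find the largest $\ell$ with $U_\ell \ge 2$. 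The equality in \eqref{e:lem3.12} and, via Cauchy--Schwarz, the bound $\mathbb{E}_{\pi_L}[f^{2}]\le 4L^{2}$ then follow as in the paper. (One small point: to get the \emph{strict} inequality in \eqref{e:lem3.1} you should take $L$ to be the largest $\ell$ with $H(\ell)\ge 0$, which forces $H<0$ strictly on $(L,m_*]$; your $L^{\ast}=\sup\{H>0\}$ only yields $H\le 0$ to the right.)

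The genuine gap is that \eqref{e:lem3.1} is not proved. You defer exactly the step that carries all the content of the lemma --- turning the constraints ``$H\le 0$ on $(L^{\ast},m_*]$'' into $\mathbb{E}_{\pi_{L^{\ast}}}[f]>\frac{20}{17}L^{\ast}$ --- declaring it ``where the real work lies''; but a constant strictly larger than $1$ here is precisely what the proof of Theorem \ref{thm:2} needs (via \eqref{e:3.22}), so without it the lemma is not established. Moreover, the plan as sketched is not set up correctly: the ``harmonic-mean'' reformulation is vacuous, because $\mathbb{E}_{\alpha_{L^{\ast}}}[1/f]=\pi_B(B_{L^{\ast}})/\alpha(B_{L^{\ast}})=1/\mathbb{E}_{\pi_{L^{\ast}}}[f]$ identically, so the target $\mathbb{E}_{\alpha_{L^{\ast}}}[1/f]<\frac{17}{20L^{\ast}}$ is literally the statement to be proved; and your one concrete constraint is misstated, since $H(v_j^{+})\le 0$ reads $\sum_{b:f(b)>v_j}\alpha(b)f(b)\le 2v_j\,\alpha(\{f>v_j\})$ and involves neither $L^{\ast}$ nor $\alpha(A_j)$, so it is not ``equivalently $v_j\alpha(A_j)\le 2\alpha(\{f\ge v_j\})(v_j-L^{\ast})$''. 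The missing step can be completed as the paper does: use the constraint at the single level $\ell=\frac{5L}{4}$ (when $B_{5L/4}\ne\eset$), i.e.\ $\mathbb{E}_{\pi_{5L/4}}[f^{2}]<\frac{5L}{2}\,\mathbb{E}_{\pi_{5L/4}}[f]$, split $\mathbb{E}_{\pi_L}[f^{2}]=2L\,\mathbb{E}_{\pi_L}[f]$ over $B_{[L,\frac{5L}{4})}$ and $B_{\frac{5L}{4}}$ to conclude that a fixed fraction of $\mathbb{E}_{\pi_L}[f]$ is carried by $B_{\frac{5L}{4}}$, and then bound $\mathbb{E}_{\pi_L}[f-L]$ below by a constant multiple of $\mathbb{E}_{\pi_L}[f]$, which rearranges to \eqref{e:lem3.1}.
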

\noindent \emph{Proof.} Let $m_*:=\max_xf(x)$. Since for all $\ell \in (0,m_*] $ and $x \in B_{\ell} $ we have  that
\[\alpha(B_{\ell}) \alpha_{\ell}(x)f(x)=\alpha(x)^2/\pi_B(x)=\pi_B(x)f(x)^{2} =\pi_{B}(B_{\ell})\pi_{\ell}(x)f(x)^{2}\]
and that $\pi_{\ell}(x)f(x)=\frac{\alpha(x)}{\pi_{B}(B_{\ell})}$, we get that for all $\ell \in  (0,m_*] $,
\begin{equation}
\label{e:pialphamomenttranslation}
 \mathbb{E}_{\pi_{\ell}}[f]= \frac{\alpha(B_{\ell})}{\pi_{B}(B_{\ell})} \quad \text{and} \quad \mathbb{E}_{\pi_{\ell}}[f^{2}]=\frac{\alpha(B_{\ell})}{\pi_{B}(B_{\ell})}\mathbb{E}_{\alpha_{\ell}}[f]= \mathbb{E}_{\pi_{\ell}}[f]\mathbb{E}_{\alpha_{\ell}}[f].
\end{equation}
For $\ell \in (0,m_*]$ let
  \begin{equation}
\label{e:pialphamomenttranslationW2}
 U_{\ell}:=\frac{1}{\ell  }\mathbb{E}_{\alpha_{\ell}}[f  ]=\frac{\pi_{B}(B_{\ell})}{\ell \alpha(B_{\ell})  }\mathbb{E}_{\pi_{\ell}}[f^{2}]=\frac{1}{\ell\mathbb{E}_{\pi_{\ell}}[f]  }\mathbb{E}_{\pi_{\ell}}[f^{2}] ,
 \end{equation}
where the middle and third equalities are due to \eqref{e:pialphamomenttranslation}.

We label the set $f(B):=\{f(x):x \in B \}$ as $\{a_{i}:1 \le i \le k \}$ such that $a_i<a_{i+1}$ for all $i<k:=|f(B)|$.     
Observe that (recall that for an interval $J$, $\alpha_J$ means $\alpha$ conditioned on $B_J$) 
\begin{itemize}
\item[(i)]
The map $\ell \mapsto  U_{\ell}$ is continuous and strictly decreasing on $(a_{i-1},a_i )$ and is left-continuous at $a_i$ for all $i \in [  k]$, where for $m \in \N$ we write $[m]:=\{1,2,\ldots,m\}$. 
\item[(ii)] We  have  that $\lim_{\ell \to a_{i}^+} U_{\ell}=\frac{1}{a_{i}}\mathbb{E}_{\alpha_{(a_{i},m_*]}} f>\frac{1}{a_{i}}\mathbb{E}_{\alpha_{[a_{i},m_*]}} f =U_{a_{i}}$ for all $i\in [k-1] $.    
\end{itemize}
Let $r:=\frac {1}{3} \|f\|_{2,B}^2$.  Since $\|f\|_{1,B}=\mathbb{E}_{\pi_B}[|f|]=\alpha(B)=1$ and $f \ge 0$, by H\"older's inequality we get that \[m_*=\max_z |f(z)| \ge \|f\|_{2,B}^2/\|f\|_{1,B}=\|f\|_{2,B}^2 =\mathbb{E}_{\alpha}[f] .\]
 Since $f=\alpha/\pi_B$ and $B_r:=\{x:f(x)\ge r\}$ we have that
 \[3r=\|f\|_{2,B}^{2}=\mathbb{E}_{\pi_B}[f^2]=\mathbb{E}_{\alpha}[f] < \mathbb{E}_{\alpha}[f1_{B_{r}}]+r(1- \alpha(B_{r})) .   \]
Hence $\mathbb{E}_{\alpha}[f1_{B_{r}}] > (2+\alpha(B_{r}))r$, which clearly implies that $U_{\ell}>2$ for all $\ell \le r$. Since $U_{s}<2$ for all $s \in (m_*/2,m_*]$ it follows from (i) and (ii) above that        
 \[L:=  \max \{\ell \in [0,m_*]  :  U_{\ell}  \le  2\} \quad \text{is well-defined and that} \quad U_{L}=2.  \]    
By \eqref{e:pialphamomenttranslationW2} we get the equality in \eqref{e:lem3.12}. The inequality in \eqref{e:lem3.12} follows from the equality  in \eqref{e:lem3.12} by the Cauchy--Schwarz inequality.  We now argue that it must be the case that $\mathbb{E}_{\pi_{L}}[f] \ge \frac{20}{17}L $. Observe that $U_L=2$ and that the maximality of $L$ together with \eqref{e:pialphamomenttranslationW2} implies that

\[ \mathbb{E}_{\pi_{\ell}}[f^{2}] < 2 \ell\mathbb{E}_{\pi_{\ell}}[f]  \quad \text{ for all }\ell \in (L,m_*]. \]
 We get that
\begin{equation*}
\begin{split}
2L\mathbb{E}_{\pi_{L}}[f]& =\mathbb{E}_{\pi_{L}}[f^{2}]=\mathbb{E}_{\pi_{L}}[f^{2}1_{B_{[L,\frac{5L}{4})}}]+\mathbb{E}_{\pi_{L}}[f^{2}1_{B_{\frac{5L}{4}}}] \\ & \le \frac{5L}{4} \mathbb{E}_{\pi_{L}}[f1_{B_{[L,\frac{5L}{4})}}]+\pi_L(B_{\frac{5L}{4}}) \mathbb{E}_{\pi_{5L/4}}[f^{2}] \\
&  < \frac{5L}{4} \mathbb{E}_{\pi_{L}}[f1_{B_{[L,\frac{5L}{4})}}]+ \frac{5L}{2}\pi_L(B_{\frac{5L}{4}}) \mathbb{E}_{\pi_{5L/4}}[f]   \\
& =\frac{5L}{4} \mathbb{E}_{\pi_{L}}[f1_{B_{[L,\frac{5L}{4})}}]+ \frac{5L}{2} \mathbb{E}_{\pi_{L}}[f1_{B_{\frac{5L}{4}}}].
\end{split}
\end{equation*}
It follows that $ 3 \mathbb{E}_{\pi_{L}}[f1_{B_{[L,\frac{5L}{4})}}] < 2 \mathbb{E}_{\pi_{L}}[f1_{B_{\frac{5L}{4}}}]$ and thus that
\[  \frac{3}{5} \mathbb{E}_{\pi_{L}}[f] <\mathbb{E}_{\pi_{L}}[f1_{B_{\frac{5L}{4}}}].  \]
 Hence
\[\mathbb{E}_{\pi_{L}}[f-L] \ge  \mathbb{E}_{\pi_{L}}[(f-L)1_{B_{\frac{5L}{4}}}] \ge \frac{1}{4}\mathbb{E}_{\pi_{L}}[f1_{B_{\frac{5L}{4}}}] >\frac{3}{20}\mathbb{E}_{\pi_{L}}[f].   \]
It follows that $\mathbb{E}_{\pi_{L}}[f] > \frac{20}{17}L$, as desired.  
\qed 

\medskip

\noindent \emph{Proof of Theorem \ref{thm:2}.}
For the remainder of the proof we abbreviate and write $\beta=1-\la $   instead of $\beta(B)=1-\lambda(B)$. Let $L$ be as above.
Recall that for $a,b \in \mathbb{R}$ we write $a \wedge b:=\min \{a,b\}$.  Denote 
\begin{equation}
\label{e:p=psubt}
\begin{split}
  T_L=T_{B_{L}^c}:=\inf \{k \ge 0  :X_k \in B_{L}^c \}. 
\end{split}
\end{equation} 
Consider the martingales \[N_k:=\beta^{-k }f(X_{k }) \quad \text{and} \quad W_k:=N_{k \wedge T_{L}   }.  \]
Let $c \in (0,1)$ be some absolute constant, to be determined later. Let
\[t:=\lceil c/(4\lambda)  \rceil \quad \text{and} \quad p:= \mathbb{P}_{\pi_{L}}[T_{L } \le t ].  \]
The  first inequality in  \eqref{e:2} is immediate when $\lambda \ge \frac{1}{4} c$, since $\mathbb{E}_{\pi_{D^c}}[T_D] \ge 1$, and then we may take $c_1=\frac{1}{4} c $. Hence we assume that $\lambda \le \frac{1}{4} c$. Thus  $\beta= 1-\lambda \ge e^{-2\lambda} $ and 
\begin{equation}
\label{e:lambdatsmall}
 \beta^{-t}\le e^{2\lambda t} \le e^c. 
\end{equation} 
  We shall show that  
\begin{equation}
\label{e:EWt}
 \mathbb{E}_{\pi_{L}}[W_{t}]\le Le^{c}( p+2\sqrt{1-p} ). 
\end{equation}
Before proving this, let us explain how this  implies the assertion of Theorem \ref{thm:2}. By \eqref{e:lem3.1} and the fact that $(W_k)_{k \in \mathbb{Z}_+}$ is a martingale, we get that
\begin{equation}
\label{e:3.22}
\mathbb{E}_{\pi_{L}}[W_{t}]=\mathbb{E}_{\pi_{L}}[W_{0}]=\mathbb{E}_{\pi_{L}}[f] \ge  \frac{20}{17 }L. 
\end{equation}
Combining this with \eqref{e:EWt}, and picking $c$ such that $e^{c} \le \frac{20}{19}$,   yields that
\[  \frac{19}{17 }\le p+2\sqrt{1-p} .\] 
 Hence
\[(1-\sqrt{1-p} )^{2}=(1-  p) -2\sqrt{1-p} +1\le \frac{15}{17} .\] Thus 
\[1-p\ge \left(1-\sqrt{15/17}\right)^2 . \]
This implies that $\mathbb{E}_{\pi_{L}}[T_{L } ] \ge \left(1-\sqrt{15/17}\right)^2 t$, as desired.

We now prove \eqref{e:EWt}. Observe that  
$f(X_{t \wedge  T_{L} } ) \mathbf{1}\{ T_{L} \le t\} < L  \mathbf{1}\{ T_{L} \le t\}. $
 By \eqref{e:lambdatsmall} this implies  that
  \begin{equation}
\label{e:clearly2}
  \mathbb{E}_{\pi_{L}}[W_{t  } \mid T_{L } \le t ] \le e^{c}L \quad \text{and so} \quad \mathbb{E}_{\pi_{L}}[W_{t  }   \mathbf{1}\{ T_{L} \le t\} ] \le e^{c}Lp.
  \end{equation}    
 By the Cauchy--Schwarz inequality (writing $W_{t} \mathbf{1}\{T_{L } > t\}=W_{t} \mathbf{1}\{T_{L } > t\}^2$)
   \begin{equation}
\label{e:clearly222}
\begin{split}
\left(  \mathbb{E}_{\pi_{L}}[W_{t} \mathbf{1}\{T_{L } > t\}   ] \right)^2 & \le   \mathbb{E}_{\pi_{L}}[W_{t}^2 \mathbf{1}\{T_{L } > t\}   ] \mathbb{P}_{\pi_{L}}[T_{L } >t] \\ & \le(1-p) e^{2c}\mathbb{E}_{\pi_{L}}[f(X_t)^2 \mathbf{1}\{T_{L} > t\}   ].
\end{split}
  \end{equation}
Since  $\mathbb{P}_{\pi_L}[X_t=x,T_{L } > t] \le \mathbb{P}_{\pi_L}[X_t=x] \le \pi_L(x)$ for all $x \in B_L$, by  \eqref{e:lem3.12} we also have that  \[\mathbb{E}_{\pi_{L}}[f(X_t)^2 \mathbf{1}\{T_{L} > t\}   ]\le \mathbb{E}_{\pi_{L}}[f^2] \le  4L^{2}.  \]
Together with \eqref{e:clearly2} and \eqref{e:clearly222} this implies \eqref{e:EWt}. \qed

\section{Proof of Theorem \ref{thm:3}}
We begin this section with a simple lemma, which implies \eqref{e:introdefotrelgeom'}.  
\begin{lemma}
\label{lem:Geomsubmultiplicativity}
Let $m,k \ge 1$.  Denote $Q=P^{G(m)} $.  Then 
\begin{equation}
\label{e:L2normofQQ*4}
P^{G(mk)} =\frac{1}{k}Q+\frac{k-1}{k}P^{G(mk)}Q=\sum_{j=1}^{\infty}\frac{1}{k}\left( \frac{k-1}{k}\right)^{j-1}Q^j.
\end{equation}
Moreover,
 \begin{equation}
\label{e:L2normofQQ*5} 
\|P^{G(mk)} - \Pi \| \le \frac{\|P^{G(m)} - \Pi\|}{k-(k-1)\|P^{G(m)} - \Pi\| }.
\end{equation}
\end{lemma}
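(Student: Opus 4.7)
The plan is to first establish the algebraic identity \eqref{e:L2normofQQ*4} by a resolvent-type calculation, and then to deduce the operator-norm bound \eqref{e:L2normofQQ*5} from it by submultiplicativity. I would start from the standard resolvent identity $Q^{G(k)} = \frac{1}{k}I + \frac{k-1}{k}\, Q\, Q^{G(k)}$, which is immediate by telescoping the defining Neumann series of $Q^{G(k)}$ (applied with $Q$ as the base stochastic matrix in place of $P$). Multiplying through by $Q$ on the left -- and using that all operators involved are rational functions of $P$ and hence commute -- would produce the first recursion in \eqref{e:L2normofQQ*4}, while expanding the Neumann series of $Q^{G(k)}$ directly would give the geometric-sum representation on the right-hand side.

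For the norm bound, the key algebraic input would be that $\Pi^2 = \Pi$ and $Q\Pi = \Pi Q = \Pi$ (the latter because $Q$ is stochastic with stationary distribution $\pi$), which together yield $(Q - \Pi)^j = Q^j - \Pi$ for every $j \ge 1$ by induction on $j$. Combined with $\sum_{j \ge 1}\frac{1}{k}\bigl(\frac{k-1}{k}\bigr)^{j-1} = 1$, one would rewrite
\begin{align*}
P^{G(mk)} - \Pi \;=\; \sum_{j \ge 1} \frac{1}{k}\Big(\frac{k-1}{k}\Big)^{j-1} (Q^j - \Pi) \;=\; \sum_{j \ge 1} \frac{1}{k}\Big(\frac{k-1}{k}\Big)^{j-1} (Q - \Pi)^j.
\end{align*}
Taking operator norms and invoking $\|(Q - \Pi)^j\| = \|Q^j - \Pi\| \le \|Q - \Pi\|^j$ (this is exactly \eqref{e:submulton}), the triangle inequality would bound $\|P^{G(mk)} - \Pi\|$ above by the geometric series $\sum_{j \ge 1}\frac{1}{k}\bigl(\frac{k-1}{k}\bigr)^{j-1}\|Q - \Pi\|^j$, which evaluates to $\|Q-\Pi\|/(k - (k-1)\|Q-\Pi\|)$. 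Convergence is guaranteed because $Q$ is stochastic, so $\|Q - \Pi\| \le 1$, whence $(k-1)\|Q - \Pi\| < k$ (the case $k=1$ being trivial, since then both sides of \eqref{e:L2normofQQ*5} coincide with $\|Q-\Pi\|$).

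The main obstacle I anticipate is the first step: verifying that the Geometric$(1/(mk))$ mixture defining $P^{G(mk)}$ admits the nested representation \eqref{e:L2normofQQ*4} as a Geometric$(1/k)$ mixture of powers of $Q = P^{G(m)}$. This is the only place where the specific algebraic structure of the Geometric distribution enters, and it needs to be checked carefully -- either by manipulating the rational-function representation $P^{G(t)} = \frac{1}{t}\bigl(I - \frac{t-1}{t}P\bigr)^{-1}$ to compare both sides as functions of $P$, or by a direct computation on the series coefficients. Once the identity is in hand, everything after that -- the reduction via $(Q - \Pi)^j = Q^j - \Pi$, submultiplicativity, and the summation of a geometric series -- is entirely routine.
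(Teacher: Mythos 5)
Your argument stands or falls on the identification $P^{G(mk)}=Q^{G(k)}\,Q$ (equivalently, on the geometric-sum representation in \eqref{e:L2normofQQ*4}), and you explicitly leave exactly that step unverified, calling it the main obstacle. That is a genuine gap, and moreover the check you propose would not succeed: the identification is false whenever $k>1$. The quickest way to see this is to take $m=1$, so that $Q=P^{G(1)}=I$ and the right-hand side of \eqref{e:L2normofQQ*4} collapses to $\sum_{j\ge1}\frac1k(\frac{k-1}{k})^{j-1}I=I\neq P^{G(k)}$. More generally, using $P^{G(t)}=\frac1t\bigl(I-\frac{t-1}{t}P\bigr)^{-1}$ and evaluating both sides as rational functions of a spectral parameter $x$, the left-hand side of \eqref{e:L2normofQQ*4} is $\bigl(mk-(mk-1)x\bigr)^{-1}$ while $\sum_{j\ge1}\frac1k(\frac{k-1}{k})^{j-1}Q^j$ gives $\bigl((mk-k+1)-k(m-1)x\bigr)^{-1}$; the two denominators differ by $(k-1)(1-x)$, so they agree only at $x=1$ (or $k=1$). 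The correct nested representation carries extra powers of $P$: the Geometric decomposition is $1+\eta_{mk}\stackrel{d}{=}1+\eta_m+Z(1+\eta_{mk}')$ with $Z\sim\mathrm{Bern}(1-\frac1k)$ independent of everything (not $1+\eta_m+Z\eta_{mk}'$), which yields
\[
P^{G(mk)}=\tfrac1k Q+\tfrac{k-1}{k}\,P^{G(mk)}QP=\sum_{j\ge1}\tfrac1k\Bigl(\tfrac{k-1}{k}\Bigr)^{j-1}Q^{j}P^{j-1}.
\]
(The same off-by-one correction is needed in \eqref{e:L2normofQQ*4} as printed and in the decomposition $1+\eta_m+\eta_{mk}Z$ appearing in the paper's own probabilistic proof of the identity.)

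The second half of your proposal is sound and survives this correction, because the extra factors of $P$ are harmless in norm: since $Q\Pi=\Pi Q=P\Pi=\Pi P=\Pi$, one has $Q^{j}P^{j-1}-\Pi=(Q-\Pi)^{j}(P-\Pi)^{j-1}$ and $\|P-\Pi\|\le1$, so your series argument goes through verbatim with the bound $\|Q^{j}P^{j-1}-\Pi\|\le\|Q-\Pi\|^{j}$, giving \eqref{e:L2normofQQ*5} unchanged. Alternatively, as the paper does, one can take norms directly in the one-step recursion (now using $\|QP-\Pi\|=\|(Q-\Pi)(P-\Pi)\|\le\|Q-\Pi\|$) and rearrange, avoiding the series altogether; the difference between summing the series and rearranging the recursion is cosmetic. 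The substantive issue is the mixture identity itself: you correctly identified it as the crux, but deferred it, and as stated it needs the $P^{j-1}$ correction before either your route or the paper's can be completed.
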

\begin{proof}
It is easy to check that if  $Z,\eta_m$ and $\eta_{mk}$ are independent,  $1+\eta_{\ell} \sim \text{Geom}(1/\ell)$ for $\ell \in \{m,mk \}$   and $Z \sim \text{Bern}(1-\frac{1}{k})$  then $1+\eta_m+\eta_{mk}Z\sim \text{Geom}(\frac{1}{mk})$. Hence
\[P^{G(mk)}=\sum_{j \in \Z_+}P^j\mathbb{P}[\eta_m+Z\eta_{mk}=j]=\frac{1}{k}\sum_{j \in  \Z_+}P^j\mathbb{P}[\eta_m=j]+\frac{k-1}{k}\sum_{j \in  \Z_+}P^j\mathbb{P}[\eta_m+\eta_{mk}=j]. \]
Since $\eta_m$ and $\eta_{mk}$ are independent, we have that $\sum_{j \in   \Z_+}P^j\mathbb{P}[\eta_m+\eta_{mk}=j]=P^{G(mk)}P^{G(m)}$. Since we also have that $\sum_{j \in   \Z_+}P^j\mathbb{P}[\eta_m=j]=P^{G(m)}$, this gives the first equality in \eqref{e:L2normofQQ*4}. The second equality in \eqref{e:L2normofQQ*4} follows from applying the first one iteratively. By \eqref{e:L2normofQQ*4} \[P^{G(mk)} - \Pi =\frac{1}{k}(Q- \Pi )+\frac{k-1}{k}(P^{G(mk)}- \Pi )(Q- \Pi ).\]  By the triangle inequality and submultiplicativity of the operator norm, it follows  that
\[\|P^{G(mk)} - \Pi\| \le \frac{1}{k}\|P^{G(m)} - \Pi\|+\frac{k-1}{k}\|P^{G(m)} - \Pi\|\|P^{G(mk)} - \Pi\|.    \]
Rearranging yields  \eqref{e:L2normofQQ*5}.
\end{proof}
Let $Q$ be an irreducible transition matrix whose stationary distribution is $\pi$. For $\eset \neq  B \subsetneq V$ we have that $C_0(B)=\{f \in \mathbb{R}^V:\mathrm{supp}(f) \subset B \} $ is  invariant under $Q_B$ (when we interpret $Q_B$ as being labeled by $V$ and defined as $Q_B(x,y)=Q(x,y)\mathbf{1}\{x,y \in B\}$, as opposed to being labeled by $B$). We  consider the operator norm of $Q_B$, given by 
\begin{equation}
\label{e:introdefofoperatornorm3}
\|Q_{B}\|:=\max \{\|  Q_{B}f\|_{2}  : f \in \mathbb{R}^V, \|f \|_{2}=1  \}.
\end{equation}  
It is easy to see that the maximum in \eqref{e:introdefofoperatornorm3}   is attained by some non-negative $f \in C_0(B)$. However, when $Q$ is non-reversible, it need not be the case that this $f$ is an eigenfunction of $Q_B$. Recall that $\beta_i(K)$ denotes the $i$th largest eigenvalue of $K$.
It is standard that
\begin{equation}
\label{e:introdefofoperatornorm4}
\begin{split}
\|Q_{B} \|^2=\|Q_B^*Q_B\|= \beta _{1}(Q_B^*Q_B)=\beta_{1}(Q_BQ_B^*)=\|Q_BQ_B^*\|=\|Q_B^* \|^2 \le \|(Q^*Q)_{B} \|.
\end{split}
\end{equation}
By \eqref{e:spectralprofiletwodefinitionsareequiv} (together with $ 1-\|(Q^*Q)_{B} \|=\lambda_{Q^*Q}(B)$) we have that
\begin{equation}
\label{e:introdefofoperatornorm4'}
\begin{split}
 1-\|(Q^*Q)_{B} \| \ge \Lambda_{Q^*Q}(\pi(B))  \ge \pi(B^c) \widehat\Lambda_{Q^*Q}(\pi(B) )\ge \pi(B^c)(1- \|Q^*Q - \Pi \|).
\end{split}
\end{equation}

\subsection{Proof of Theorem \ref{thm:3}}
\label{s:thm3}
\begin{proof}
We first prove the second inequality in \eqref{e:3T3}.    Let $s:=\rel^{\text{Geom}}(1/e)$. We   write $Q:=P^{\mathrm{G}( s)}$. By the definition of $\rel^{\text{Geom}}(1/e)$  we have that $\beta_2(Q^{*}Q) = 1/e^2$.  Let $A\subsetneq V$ be such that $\pi(A) \ge 1/2 $. Denote $B:=A^c$. Let $b:=\frac{1+e^{-2}}{2}$. By \eqref{e:introdefofoperatornorm4} and \eqref{e:introdefofoperatornorm4'}
\[ 1-\|Q_{B}\|^{2} \ge 1-\|(Q^*Q)_{B} \|  \ge \pi(A)(1- e^{-2}) \ge \frac{1}{2}(1-e^{-2}), \quad \text{and so} \quad \|Q_{B}\| \le \sqrt{b}. \]
Consider $(\eta_s(j):j \in \N )$ which are i.i.d.\ such that $1+\eta_s(1) \sim$ Geom($1/s$). Let \[\rho :=\inf\{k:X_{\tau_k} \in A \}, \quad \text{where} \quad \tau_k:=\sum_{j=1}^k\eta_s(j). \] Since $\|(Q_{B})^m\| \le\|Q_{B}\|^{m} $ and $\langle (Q_{B})^{m}1_B,1_B \rangle_{\pi} \le\|(Q_{B})^m\|\langle1_B,1_B \rangle_{\pi} \le \pi(B)\|Q_{B}\|^{m}$, we have \[\pi(B)\mathbb{E}_{\pi_{B}}[\rho>m] =\langle (Q_{B})^{m}1_B,1_B \rangle_{\pi} \le \pi(B) \|Q_{B}\|^{m}.  \] Hence $\mathbb{E}_{\pi_{B}}[\rho] \le \frac{1}{1-\|Q_{B}\|} \le\frac{1}{1-\sqrt{b} }$. By Wald's identity $\mathbb{E}_{\pi_{B}}\left[\tau_{\rho}\right] = \mathbb{E}_{\pi_{B}}[\rho](s-1)$. We get that
\[\mathbb{E}_{\pi_{B}}[T_{A}] \le \mathbb{E}_{\pi_{B}}\left[\tau_{\rho}\right] \le \frac{s-1}{1-\sqrt{b} } , \]
as desired.
We now prove the first inequality in \eqref{e:3T3}.

Let $c:=1/\sqrt{20}$. Let $t:=\rel^{\text{Geom}}(1-c)$. Let $K:=P^{G(t)}$. Let $\eta_t$ be independent of $(X_k)_{k \in \mathbb{Z}_+}$ and satisfy $1+\eta_t \sim \text{Geom}(1/t)$. Let $C \ge 1$ be some absolute constant, to be determined soon. We may assume that $t \ge C$, since otherwise \eqref{e:introdefotrelgeom'} implies that  $\rel^{\text{Geom}}(1/e)$ is bounded above by some absolute constant, in which case we may use the trivial estimate $\mathbb{E}_{\pi_D}[T_{D^c}] \ge 1$. It suffices to show that there exists an absolute constant $\delta \in (0,1)$ such that  for some  $D \subsetneq V$ satisfying that $\pi(D) \le 1/2$ we have that 
\begin{equation}
\label{e:NTS}
\mathbb{P}_{\pi_{D}}[T_{D^{c}}>\eta_t  ]  \ge \delta.
\end{equation}
Indeed, provided that $C$ is chosen to be sufficiently large, there is an absolute constant $c' \in (0,1/2)$ such that $\mathbb{P}[\eta_t < c ' \delta t  ] \ge \delta / 2$, which in conjunction with \eqref{e:NTS} implies that $\mathbb{P}_{\pi_{D}}[T_{D^{c}}>c ' \delta t  ]  \ge \delta/2$. Hence $\mathbb{E}_{\pi_{D}}[T_{D^{c}}  ] \ge \frac 12 c ' \delta^{2} t$, which by  \eqref{e:introdefotrelgeom'} yields that $\mathbb{E}_{\pi_{D}}[T_{D^{c}}  ] \gtrsim \rel^{\text{Geom}}(1/e) $, as desired.

We now prove \eqref{e:NTS} (for some $D$ such that  $\pi(D) \le 1/2$  to be defined below). By \eqref{e:introdefofoperatornorm2} and  \eqref{e:introdefotrelgeom} we have that $\beta_2(KK^*)=1-c^2$. Let $f \in \mathbb{R}^V$ be an eigenfunction of $KK^*$ corresponding to $\beta_2(KK^*)$, i.e.,  $KK^* f=(1-c^2)f$.  Let $B:=\{x \in V:f(x)>0 \}$. Without loss of generality we may assume that $\pi(B) \le 1/2$, since otherwise we may consider $-f$ instead of $f$.
We normalize $f$ such that $f = \frac{\mu}{\pi_B}$, where $\mu 1_B$ is a distribution on $B$. This can always be done since if $Q$ is reversible w.r.t.\ $\pi$, $a \in [-1,1]$ and $\sigma$ is a signed measure on $V$ then $\sigma Q=a\sigma $ if and only if $Q \frac{\sigma}{\pi}=a \frac{\sigma}{\pi}$ (where, as usual, $\frac{\sigma}{\pi} \in \mathbb{R}^V$ is given by $\frac{\sigma}{\pi} (x):=\frac{\sigma(x)}{\pi(x)}$). In our case $Q:=KK^*$ and the assertion of the last sentence  means that one can write the right eigenfunction $f$ as $f=\frac{\nu}{\pi}$ where $\nu KK^*=(1-c^2)\nu$. Then $\nu$ is positive on $B$ (since $f(x)$ and $\nu(x)$ have the same sign for all $x \in V $) and if we normalize $\nu$ (and hence $f$) so that $\sum_{b \in B}\nu(b)=\pi(B)$ and set $\mu:=\nu/\pi(B)$, then we indeed have that $f=\frac{\mu}{\pi_B}$ and that  $\mu 1_B$ is a distribution on $B$. 

 We employ a similar notation as in \S\ref{s:thm1.2}. For the remainder of this section, we write   \[B_r:=\{b\in B : f(b) \ge  r \}.  \] 
We also write $\pi_r:=\pi_{B_r}$ for $\pi$ conditioned on $B_r$. 
Analogously to Lemma \ref{lem:good-superlevelset}, we have the following lemma. Since the proof is completely analogous, we omit it.

\begin{lemma}
\label{lem:good-superlevelset'} There exists some $L>0  $ such that 
\begin{equation}
\label{e:lem3.1'}
 \mathbb{E}_{\pi_{L}}[f]  = \frac{ \mu(B_{L})}{\pi_{B}(B_{L})} \ge  \frac{20}{17 }L  \quad \text{and}
\end{equation}
\begin{align}
\label{e:lem3.12'}
\mathbb{E}_{\pi_{L}}[f^{2}] = 2 L  \mathbb{E}_{\pi_{L}}[f] \le 4L^2  .
\end{align}
\end{lemma}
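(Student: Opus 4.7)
The plan is to mimic the proof of Lemma \ref{lem:good-superlevelset} essentially verbatim, with $\mu$ playing the role that was played there by the quasi-stationary distribution $\alpha$. Reversibility of $P$ entered the earlier argument only through the identification of $\alpha$ as the quasi-stationary distribution of $B$; the algebraic identities \eqref{e:pialphamomenttranslation} themselves are pure computations that use only $f = \alpha/\pi_B$, $f \ge 0$, and $\mathbb{E}_{\pi_B}[f] = \alpha(B) = 1$. In the present setting we have $f = \mu/\pi_B$ on $B$ with $f \ge 0$ and $\mathbb{E}_{\pi_B}[f] = \mu(B) = 1$ (the latter by the construction in the paragraph preceding the lemma, where $\nu$ is rescaled so that $\mu \mathbf{1}_B$ is a probability distribution on $B$), so exactly the same algebra applies.

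First I would introduce $\mu_\ell$, the distribution $\mu$ conditioned on $B_\ell$, that is $\mu_\ell(x) := \mu(x)\mathbf{1}\{x \in B_\ell\}/\mu(B_\ell)$, and similarly $\mu_J$ for intervals $J \subset \mathbb{R}_+$. The relations
\begin{equation*}
\pi_B(b) f(b)^2 = \mu(b) f(b), \qquad \mathbb{E}_{\pi_\ell}[f] = \frac{\mu(B_\ell)}{\pi_B(B_\ell)}, \qquad \mathbb{E}_{\pi_\ell}[f^2] = \mathbb{E}_{\pi_\ell}[f]\,\mathbb{E}_{\mu_\ell}[f]
\end{equation*}
are then immediate, giving the analogue of \eqref{e:pialphamomenttranslation}. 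Setting $U_\ell := \ell^{-1}\mathbb{E}_{\mu_\ell}[f] = \mathbb{E}_{\pi_\ell}[f^2]/(\ell\, \mathbb{E}_{\pi_\ell}[f])$, the same observations about continuity and monotonicity hold: $\ell \mapsto U_\ell$ is strictly decreasing and left-continuous on each maximal interval disjoint from $f(B)$, with an upward jump at each value of $f(B)$.

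Next I would verify via H\"older's inequality, using $\|f\|_{1,B}=1$ and $f \ge 0$, that $\max_z f(z) \ge \|f\|_{2,B}^2 = \mathbb{E}_\mu[f]$, and then splitting $\mathbb{E}_\mu[f] = \mathbb{E}_\mu[f\mathbf{1}_{B_r}] + \mathbb{E}_\mu[f\mathbf{1}_{B \setminus B_r}]$ with $r := \|f\|_{2,B}^2/3$ shows that $U_\ell > 2$ for all $\ell \le r$, while $U_\ell < 2$ for $\ell$ close to $\max f$. Combined with the continuity structure, this produces a well-defined $L := \max\{\ell \in [0,\max f] : U_\ell \le 2\}$ with $U_L = 2$, which, together with Cauchy--Schwarz applied to $\mathbb{E}_{\pi_L}[f]^2 \le \mathbb{E}_{\pi_L}[f^2] = 2L\mathbb{E}_{\pi_L}[f]$, yields \eqref{e:lem3.12'}.

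To obtain \eqref{e:lem3.1'}, I would decompose
\begin{equation*}
\mathbb{E}_{\pi_L}[f^2] = \mathbb{E}_{\pi_L}[f^2 \mathbf{1}_{B_{[L,5L/4)}}] + \mathbb{E}_{\pi_L}[f^2 \mathbf{1}_{B_{5L/4}}]
\end{equation*}
and invoke the maximality of $L$, which forces $\mathbb{E}_{\pi_\ell}[f^2] < 2\ell\,\mathbb{E}_{\pi_\ell}[f]$ for all $\ell > L$, applied at $\ell = 5L/4$. Combining with $U_L = 2$ and rearranging reproduces the key intermediate estimate $\mathbb{E}_{\pi_L}[f \mathbf{1}_{B_{5L/4}}] > \tfrac{3}{5}\mathbb{E}_{\pi_L}[f]$, and then
\begin{equation*}
\mathbb{E}_{\pi_L}[f - L] \ge \mathbb{E}_{\pi_L}[(f-L)\mathbf{1}_{B_{5L/4}}] \ge \tfrac{1}{4}\mathbb{E}_{\pi_L}[f\mathbf{1}_{B_{5L/4}}] > \tfrac{3}{20}\mathbb{E}_{\pi_L}[f],
\end{equation*}
giving $\mathbb{E}_{\pi_L}[f] > \tfrac{20}{17} L$. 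No step uses reversibility of $P$; the only potential obstacle would be if the normalization $f = \mu/\pi_B$ with $\mu\mathbf{1}_B$ a probability distribution were unavailable, but this is precisely what was arranged by the rescaling of the right eigenvector of $KK^*$ in the paragraph preceding the lemma, so no new work is required.
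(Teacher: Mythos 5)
Your proposal is correct and takes exactly the approach the paper intends: the paper omits this proof as ``completely analogous'' to Lemma \ref{lem:good-superlevelset}, and you carry that argument over with $\mu$ in place of $\alpha$, correctly identifying that only $f=\mu/\pi_B\ge 0$ on $B$ and $\mu(B)=1$ are used, not reversibility of $P$ or quasi-stationarity. The only blemishes (the definition of $L$ should read $\max\{\ell: U_\ell \ge 2\}$ so that $U_\ell<2$ for $\ell>L$, and the factor $\tfrac14$ in the final chain is really $\tfrac15$, costing only a slightly worse constant) are inherited verbatim from the paper's own proof of Lemma \ref{lem:good-superlevelset}, so they are not gaps introduced by your adaptation.
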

Fix $L$ as in Lemma \ref{lem:good-superlevelset'}. Recall that  $t=\rel^{\text{Geom}}(1-c)$ and  $c=1/\sqrt{20}$.  We now prove that \eqref{e:NTS} holds for $D=B_L$ with $\delta= \left(1-\sqrt{15/17}\right)^2 $.
Let $1+\eta_t$ and $1+\eta_t^*$ be i.i.d.\ Geom$(1/t)$ and independent of  $(X_k)_{k \in \mathbb{Z}_+}$. Let $(Y_k)_{k \in \mathbb{Z}_+}$ be defined as follows: Until time $\eta_t$ it evolves like a Markov chain with transition matrix $P$. After time $\eta_t$ it evolves like a Markov chain with transition matrix $P^{*}$ (with initial state $Y_{\eta_t}$), which is independent of $\eta_t^*$. That is,   $(Y_k)_{k=0}^{\eta_t}=(X_k)_{k=0}^{\eta_t}$ and for all $v \in V$ and $m \in \mathbb{Z_+}$, given that $(X_{\eta_t},\eta_t)=(v,m)$, writing $Z_k:=Y_{k+m}$ we have that $(Z_k)_{k \in \mathbb{Z}_+}$ is independent of $\eta_t^*$ and evolves like the Markov chain with transition matrix $P^*$ with initial state $v$. This construction gives that
\begin{equation}
\label{e:YetaKKstar}
\mathbb{E}_{x}[f(Y_{\eta_t+\eta_t^*})]=KK^*f(x)=(1-c^{2})f(x) \quad \text{for all }x \in V.
\end{equation}
Denote $T_L:=T_{B_{L}^c}=\inf \{k:Y_k \in B_L^c \}$. Denote
\[p:=\mathbb{P}_{\pi_L}[T_{L} \le \eta_t]+\mathbb{P}_{\pi_L}[T_{L} > \eta_t,Y_{\eta_t+\eta_t^*} \notin B_L ].\]
While we are only interested in proving that $\mathbb{P}_{\pi_L}[T_{L} \le \eta_t]$ is bounded away from 1, it will be convenient to instead argue that $1-p$ is bounded away from 0.

By the memoryless property of the Geometric distribution, for all $x \in B_L^c$ we have that 
\[\mathbb{E}_{\pi_L}[f(Y_{\eta_t+\eta_t^*}) \mid T_{L} \le \eta_t  ,X_{T_{L}}=x]=\mathbb{E}_{x}[f(Y_{\eta_t+\eta_t^*})]=(1-c^{2})f(x) < (1-c^2)L < L  . \]
Since $f(Y_{\eta_t+\eta_t^*})\mathbf{1}\{T_{L} > \eta_t  ,Y_{\eta_t+\eta_t^*} \notin B_L\} < L \mathbf{1}\{T_{L} > \eta_t  ,Y_{\eta_t+\eta_t^*} \notin B_L\}$, it follows that
\begin{equation}
\label{e:exitearly}
\mathbb{E}_{\pi_L}[f(Y_{\eta_t+\eta_t^*})\mathbf{1}\{T_{L} \le \eta_t  \}]+\mathbb{E}_{\pi_L}[f(Y_{\eta_t+\eta_t^*})\mathbf{1}\{T_{L} > \eta_t  ,Y_{\eta_t+\eta_t^*} \notin B_L\}]<pL. \end{equation}
Denote $\xi:=\mathbf{1}\{T_{L} > \eta_t ,Y_{\eta_t+\eta_t^*} \in B_L\}$. Since  $f(Y_{\eta_t+\eta_t^*})\xi=f(Y_{\eta_t+\eta_t^*})\xi^2$ and $\mathbb{E}_{\pi_{L}}[\xi]=1-p$,  the Cauchy--Schwarz inequality and \eqref{e:lem3.12'}
yield that\begin{equation*}
\begin{split}
\left( \mathbb{E}_{\pi_{L}}[f(Y_{\eta_t+\eta_t^*})\mathbf{1}\{T_{L} > \eta_t ,Y_{\eta_t+\eta_t^*} \in B_L\}   ]\right)^2 & \le \mathbb{E}_{\pi_{L}}[f(Y_{\eta_t+\eta_t^*})^{2}\mathbf{1}\{ Y_{\eta_t+\eta_t^*} \in B_L\}](1-p)
\\ & \le (1-p) \mathbb{E}_{\pi_L}[f^2] \le 4L^2(1-p),
\end{split}
\end{equation*}
where in the second inequality we used the fact that
\[\mathbb{P}_{\pi_{L}}[ Y_{\eta_t+\eta_t^*} =x] =\frac{\mathbb{P}_{\pi}[ Y_{\eta_t+\eta_t^*} =x,Y_{0} \in  B_{L}]}{\pi(B_L)} \le \frac{\mathbb{P}_{\pi}[ Y_{\eta_t+\eta_t^*} =x]}{\pi(B_L)}= \pi_L(x) \quad \text{for all }x \in B_L. \] 
Combining \eqref{e:lem3.1'}, \eqref{e:YetaKKstar}, \eqref{e:exitearly} and the display following \eqref{e:exitearly}  yields that 
   \begin{equation}
\label{e:clearly2222}
\frac{20}{17 }L  \le \mathbb{E}_{\pi_{L}}[f]=(1-c^{2})^{-1}\mathbb{E}_{\pi_{L}}[f(Y_{\eta_t+\eta_t^*})]<(1-c^{2})^{-1}[pL+2L \sqrt{1-p} ].
  \end{equation}
By our choice of $c$ we have that $1-c^{2}=\frac{19}{20}$. As in the proof of Theorem \ref{thm:2} we get that  
\[1-p\ge \left(1-\sqrt{15/17}\right)^2 , \]
as desired.
 \end{proof}

\section{Extensions of Theorems \ref{thm:1} and \ref{thm:2}}
\label{s:extensions}
\subsection{A discussion about the validity of our results in the continuous time setup}
\label{s:ctstime}
We now explain that all of the results from this paper are valid also for continuous-time Markov chains on a finite state space, and that this follows from the discrete time result. Indeed, the time $t$ transition probabilities of   a continuous-time Markov chain on a finite state space are given by the matrix $e^{tG}$, where $G$ is the Markov infinitesimal  generator of the chain. When the state space is finite, it is always the case that $G$ can be written as $r(P-I)$ for some $r \in (0,\infty)$ and some transition matrix $P$ (crucially, we allow the diagonal entries of $P$ to be positive). Moreover, $G$ is reversible if and only if $P$ is reversible. Since all considered quantities differ by exactly a multiplicative factor $r$ between $G$ and $P$, we get that the continuous time analogs of our results follow from the discrete time setup.
\subsection{Proof of Theorem \ref{thm:LambdaPucountable}}
\begin{proof}
We first note that  \eqref{e:countable2} follows from \eqref{e:countable1} in a straightforward manner. We now prove \eqref{e:countable1}. Let $B \subset V$ be such that $0<\pi(B)<\infty$. Let $B_1 \subseteq B_2 \subseteq \cdots$ be an exhaustion of $B$ (i.e., this is a nested sequence of subsets of $B$ satisfying that  $B=\cup_{n \in \N}B_n$) by finite sets. Since for all  $f \in  \ell^2(V,\pi)$ we have that $\|P_B f\|_2=\|P_B (1_{B}f)\|_2 \le\|P_B (1_{B}|f|)\|_2$ and $\|f\|_2=\||f|\|_2 \ge \|1_{B}|f|\|_2  $  we get that
\[\|P_B\|=\sup \{\|P_B f\|_2 /\|f\|_2: f \in  \ell^2(V,\pi), \eset \neq \mathrm{supp}(f) \subseteq B, f \ge 0 \}.\] 
Hence $\|P_B\| \ge \|P_D\|$ for all $D \subseteq B$.
  Since for all finite $D \subseteq B$ we have that $\|P_D\|=\beta(D)=\max \{\beta(D'):D' \subset D,\, D' \text{ is irreducible} \}=\max \{\|P_{D'}\|:D' \subset D,\, D' \text{ is irreducible} \}$, it suffices to show that  $\lim_{n \to \infty}\|P_{B_n}\|_2 \ge\|P_B\|$ (the limit exists since $\|P_{B_n}\|_2$ is non-decreasing).

Let  $f \in  \ell^2(V,\pi)$ be such that $\eset \neq \mathrm{supp}(f) \subseteq B$ and $f \ge 0$. Let $f_n:=1_{B_n}f $. Then $0 \le P_{B_n} f_{n}(x) \nearrow  P_{B}f(x)$ for all $x$. Hence  by the monotone convergence theorem $\lim_{n \to \infty}\|P_{B_n} f_{n}\|_2=\|P_B f\|_2$ and $\lim_{n \to \infty}\|f_{n}\|_2=\|f\|_2$. It follows that $\frac{\|P_B f\|_2 }{\|f\|_2}=\lim_{n \to \infty}\frac{\|P_{B_n} f_{n}\|_2}{\|f_{n}\|_2} \le \lim_{n \to \infty}\|P_{B_n}\|_2  $. Since $f$ is arbitrary, we get that $\|P_B \| \le\lim_{n \to \infty}\|P_{B_n}\|_2  $, as desired. This concludes the proof of \eqref{e:countable1}.  

We now prove \eqref{e:countable3}. In particular, we now also assume that $B$ is irreducible. Hence, we may pick an exhaustion  $B_1 \subseteq B_2 \subseteq \cdots$ of $B$ by finite irreducible sets.  Consider $g(b):=\mathbb{E}_{b}T_{B^c} $ and  $g_n(b):=\frac{\pi_{B_{n}}(b)}{\pi_B(b)} \mathbb{E}_{b}T_{B_{n}^c}$ (for $b \notin B_n$ we set $g_n(b):=0$). Then \[\mathbb{E}_{\pi_{B}}T_{B^c} =\sum_{b \in B}\pi_B(b)g(b) \quad \text{and} \quad \mathbb{E}_{\pi_{B_{n}}}T_{B_n^c} =\sum_{b \in B}\pi_B(b)g_{n}(b).\]
Since $\pi(B)<\infty$ we have that 
 $T_{B_{n}^c} \nearrow T_{B^c}<\infty$ a.s. Thus by the monotone convergence theorem we get that  $\mathbb{E}_{b}T_{B_{n}^c} \nearrow  g(b)=\lim_{n \to \infty}g_n(b)$  for all $b \in V$.  Observe that for all $n$ and all $b \in B$ we have that $\mathbb{E}_{b}T_{B_{n}^c} \le g_{n}(b) \le \frac{\pi(B)}{\pi(B_1)}g(b) $, since  $1 \le \frac{\pi_{B_{n}}(b)}{\pi_B(b)} \le \frac{\pi(B)}{\pi(B_1)}<\infty$. Another useful observation is that $\pi_B(b)\mathbb{E}_{b}T_{B_{n}^c} \le \pi_{B_{n}}(b)\mathbb{E}_{b}T_{B_{n}^c}$ for all $b \in B$ and all $n$.  Thus if $\mathbb{E}_{\pi_{B}}T_{B^c} =\infty $, then  by Fatous' lemma \[\liminf_{n \to \infty}\mathbb{E}_{\pi_{B_{n}}}T_{B_n^c} \ge \liminf_{n \to \infty} \sum_{b \in B}\pi_B(b)\mathbb{E}_{b}T_{B_{n}^c} \ge \sum_{b \in B}\pi_B(b)g(b)=\mathbb{E}_{\pi_{B}}T_{B^c} =\infty,\] while if $\mathbb{E}_{\pi_{B}}T_{B^c} <\infty $ then by dominated convergence theorem
\[\lim_{n \to \infty}\mathbb{E}_{\pi_{B_{n}}}T_{B_{n}^c}=\lim_{n \to \infty}\sum_{b \in B}\pi_B(b)g_{n}(b)= \sum_{b \in B}\pi_B(b)g(b)=\mathbb{E}_{\pi_{B}}T_{B^c} <\infty. \]
This concludes the proof of \eqref{e:countable3}. Finally, \eqref{e:countable4}-\eqref{e:countable6} follow easily by combining \eqref{e:countable1}-\eqref{e:countable3} with \eqref{e:2}. 
\end{proof}
\subsection{Proof of Proposition \ref{p:spectral3} }
\label{s:cheeger}
\begin{proof}
The inequality $\chi_{\ell}(u) \le \zeta_{\ell}(u)$ was explained before the statement of Proposition \ref{p:spectral3}. We now prove that $\zeta_{\ell}(u) \le \frac{\ell \log 2}{ \Lambda \left(u 2^{\ell+2}\right)}$.  By\footnote{In \cite{HH} the notation $\|f\|_{p,\nu}$ means $(\sum_x \nu(x)|f(x)|^p)^{1/p}$.} \cite[Lemma 3.7]{HH} (which is a variant of    \cite[Lemma 2.1]{spectral}) we have for all $0 \not \equiv f \in \ell^2(V,\pi) \cap \ell^1(V,\pi) $ that
\begin{equation}
\label{e:spb20111}
\langle( I-P)f ,f \rangle_{\pi}/\|f\|_2^{2} \ge \frac 12 \Lambda(4 \|f\|_1^2/\|f\|_2^2 ). 
\end{equation}
As in the proof of Theorem 2.1 in \cite{spectral}, since $\frac{\mathrm{d}}{\mathrm{d}t}\|P_{t}f\|_2^{2}=-2\langle( I-P)P_{t}f ,P_{t}f \rangle_{\pi}$, Gr\"ownall's lemma and \eqref{e:spb20111} (applied to $f=P_{t}\frac{\mu}{\pi}$) yield that for all $\mu \in \ell^2(\mathcal{P}(V),\pi)$ and all $\delta \in (0,1)$ we have that
\begin{equation}
\label{e:Gwel}
\| \mu P_{t}\|_{2,\pi}^{2} \le \delta \|\mu\|_{2,\pi}^{2} \quad \text{for all} \quad t \ge \int_{4\|\mu\|_{2,\pi}^{-2}}^{4 \delta^{-1} \|\mu\|_{2,\pi}^{-2}}\frac{\mathrm{d}u}{u \Lambda(u)}.  
\end{equation}
Since \[ \int_{4\|\mu\|_{2,\pi}^{-2}}^{2^{\ell+2} \|\mu\|_{2,\pi}^{-2}}\frac{\mathrm{d}u}{u \Lambda(u)} \le \frac{1}{ \Lambda(2^{\ell+2} \|\mu\|_{2,\pi}^{-2})} \int_{4\|\mu\|_{2,\pi}^{-2}}^{2^{\ell+2} \|\mu\|_{2,\pi}^{-2}}\frac{\mathrm{d}u}{u}=\frac{\ell \log 2}{ \Lambda(2^{\ell+2} \|\mu\|_{2,\pi}^{-2})},\] it follows that $\zeta_{\ell}(u) \le \frac{\ell \log 2}{ \Lambda \left(u 2^{\ell+2}\right)}$,  as desired.

It remains to prove that for some absolute constants $c \in (0,1)$ and $\ell \in \mathbb{N}$ we have that $\chi_{\ell}(u) \ge c/\Lambda(u)$. Consider some $D$ such that $\pi(D)<\infty$. By reversibility
\[\| \pi_D P_{t}\|_{2,\pi}^{2} =\sum_x \pi(x)\left(\sum_{b \in D}\pi_D(b) P_{t}(b,x)/\pi(x)\right)\left(\sum_{b \in D}\pi_D(b) P_{t}(x,b)/\pi(b)\right)=\frac{ \mathbb{P}_{\pi_{D}}[X_{2t} \in D]}{\pi(D)}.\]
It follows that $\| \pi_D P_{t}\|_{2,\pi}^{2} /\| \pi_D\|_{2,\pi}^{2}=\mathbb{P}_{\pi_{D}}[X_{2t} \in D] \ge\mathbb{P}_{\pi_{D}}[T_{D^c}>2t]$. Hence, by Theorem \ref{thm:LambdaPucountable} it suffices to show that there exists an absolute constant $c \in (0,1)$ such that for all  $B\in \mathrm{FI}(V,u)$ satisfying that that $\lambda(B) \le c$, there exists some $D \subseteq B$ such that \[\mathbb{P}_{\pi_{D}}[T_{D^{c}}>c/\lambda(B) ] \ge c \]
(the case that $\la(B)>c$ holds trivially with $D=B$, provided $c$ is sufficiently small). This follows from the proof of Theorem \ref{thm:2}. While Theorem \ref{thm:2} is proved in discrete time, the continuous time case of the last claim can easily be derived from the discrete time case by considering a coupling of the discrete time chain with its continuous time version in which they make the same sequence of jumps.
\end{proof}

\bibliographystyle{plain}
\bibliography{hitrelax}

\vspace{2mm}

\end{document}